\documentclass{amsart}
\usepackage[latin1]{inputenc}
\usepackage{epsfig}
\usepackage{color}
\usepackage[british,english]{babel}
\usepackage{amsthm}
\usepackage{amsmath}
\usepackage{amsfonts}
\usepackage{amssymb}

\usepackage{graphicx}

\newtheorem{corollary}{Corollary}[section]

\newtheorem{lemma}[corollary]{Lemma}
\newtheorem{prp}[corollary]{Proposition}
\newtheorem{remark}[corollary]{Remark}
\newtheorem{thm}[corollary]{Theorem}

\newfont{\sBlackboard}{msbm10 scaled 900}

\newcommand{\mylabel}[1]{\label{#1}
            \ifx\undefined\stillediting
            \else \fbox{$#1$}\fi }
\newcommand{\BE}{\begin{equation}}

\newcommand{\EEQ}{\end{equation}}
\newcommand{\rfb}[1]{\mbox{\rm
   (\ref{#1})}\ifx\undefined\stillediting\else:\fbox{$#1$}\fi}

\newfont{\Blackboard}{msbm10 scaled 1200}

\newfont{\roma}{cmr10 scaled 1200}
\def\RR{{\mathbb R} }

%

%

%
\newcommand{\mm}    {{\hbox{\hskip 0.5pt}}}

\newcommand{\bluff} {{\hbox{\raise 15pt \hbox{\mm}}}}
%

%

%
 
scaled\magstep2

\makeatletter
\def\section{\@startsection {section}{1}{\z@}{-3.5ex plus -1ex minus
    -.2ex}{2.3ex plus .2ex}{\large\bf}}
\makeatother

\def\be{\begin{equation}}
\def\ee{\end{equation}}

\begin{document}
\title[Caffarelli-Kohn-Nirenberg inequality]{A weighted anisotropic variant of the Caffarelli-Kohn-Nirenberg inequality and applications }

\author[A. Bahrouni]{Anouar Bahrouni}
\address[A. Bahrouni]{Mathematics Department, University of Monastir,
Faculty of Sciences, 5019 Monastir, Tunisia}
\email{\tt bahrounianouar@yahoo.fr}

\author[V.D. R\u{a}dulescu]{Vicen\c{t}iu D. R\u{a}dulescu}
\address[V.D. R\u{a}dulescu]{Faculty of Applied Mathematics, AGH University of Science and Technology, al. Mickiewicza 30, 30-059 Krak\'ow, Poland \& Institute of Mathematics ``Simion Stoilow" of the Romanian Academy, P.O. Box 1-764,
          014700 Bucharest, Romania}
\email{\tt vicentiu.radulescu@imar.ro}

\author[D.D. Repov\v{s}]{Du\v{s}an D. Repov\v{s}}
\address[D.D. Repov\v{s}]{Faculty of Education and Faculty of Mathematics and Physics, University of Ljubljana,
 SI-1000 Ljubljana, Slovenia}
\email{\tt dusan.repovs@guest.arnes.si}

\keywords{$p(x)$-Laplace operator, Caffarelli-Kohn-Nirenberg inequality, multiple solutions, fountain theorem.\\
\phantom{aa} 2010 AMS Subject Classification: Primary 35J60, Secondary 35J91, 58E30}

\begin{abstract}
 We present a weighted version of the Caffarelli-Kohn-Nirenberg inequality in the framework of variable exponents. The combination of this inequality with a variant of the fountain theorem, yields the existence of infinitely many solutions for a class of non-homogeneous
problems with Dirichlet boundary condition.
\end{abstract}

\maketitle

\section{Introduction}
Nonlinear problems with variable exponents are motivated by numerous models in the applied sciences that are driven by some classes of non-homogeneous partial differential operators. In some circumstances, the standard analysis based on the theory of usual Lebesgue and Sobolev function  spaces,  $L^p$ and $W^{1,p}$, is not appropriate in the framework of materials that involve non-homogeneities. For instance, both electro-rheological ``smart" fluids  and phenomena arising in image processing are properly described by nonlinear models in which the exponent $p$ is not necessarily constant.
The variable exponent describes the geometry of the material which is allowed to change its hardening  exponent at different points.
This leads to the analysis of variable exponent Lebesgue and Sobolev function spaces (denoted by $L^{p(x)}$ and $W^{1,p(x)}$), where $p$ is a real-valued (non-constant) function. We point out important contributions of Halsey \cite{halsey} and Zhikov \cite{zh2} in strong relationship with the  behavior of strongly anisotropic materials. This is mainly achieved
in the framework of the homogenization and nonlinear elasticity.
We refer, e.g., to Acerbi and Mingione \cite{acerbi} and Ru\v{z}i\v{c}ka
\cite{ruz} (electrorheological ``smart" fluids) and Antontsev and
Shmarev \cite{shm} (nonlinear Darcy's law in porous media).
  A thorough variational analysis of the problems with variable exponents has been developed in the recent monograph by R\u adulescu and Repov\v s \cite{radrep} (see also the survey paper by R\u adulescu \cite{radnla} and the important contributions of Pucci {\it et al.} \cite{patrizia1, patrizia2}).

Let $\Omega\subset\RR^N$ ($N\geq 2$) be a bounded domain with smooth boundary.  The following Caffarelli-Kohn-Nirenberg inequality \cite{caf} states that
given $p\in(1,N)$ and real numbers $a,\,b$ and $q$ such that
$$-\infty<a<\frac{N-p}{p},\;\;\; a\leq b\leq a+1,\;\;\; q=\frac{Np}{N-p(1+a-b)}\,,$$
 there is a
positive constant $C_{a,b}$ such that
 for all $u\in C_c^1(\Omega)$,
\begin{equation}\label{CKNineq}\left(\int_{\Omega}|x|^{-bq}|u|^q\;dx\right)^{p/q}\leq C_{a,b}\int_{\Omega}|x|^{-ap}|\nabla u|^p\;dx\,.\end{equation}
This result goes back to the celebrated Hardy inequality \cite{hardy}, which establishes that if $1\leq p<N$, then
for all $u\in C^\infty_0(\RR^N\setminus\{0\})$
$$\left\|\frac{u(x)}{\|x\|} \right\|_{L^p(\RR^N)}\leq \frac{p}{N-p}\,\|\nabla u\|_{L^p(\RR^N)},$$
where $\|x\|=\sqrt{x_1^2+\cdots +x_n^2}$ and the constant $\frac{p}{N-p}$ is known to be sharp.
Inequality \eqref{CKNineq} has been widely analyzed in many different settings (see, e.g., \cite{abdellaoui, abdellaui, ad, wang, chern, ambr, ambro, gr, gaz}). Nowadays, there is vast literature on this subject,
for example, the MathSciNet search shows about 5000 research works related to this topic.

The main aim of this paper is to present an analogoue of the Caffarelli-Kohn-Nirenberg inequality in the framework of variable exponents. To the best of our knowledge, there are very few results dealing with this topic.
 For instance,
the following result was established in \cite{mh}: there exists a positive constant $C$ such that
\begin{equation}\label{cafrad}
\int_\Omega|u(x)|^{p(x)}\;dx\leq C\int_\Omega|\overrightarrow
a(x)|^{p(x)}|\nabla u(x)|^{p(x)}\;dx,\;\;\;\mbox{for all}\;u\in
C_c^1(\Omega),
\end{equation}
where  $\Omega\subset\RR^N$ ($N\geq 2$) is a bounded
domain with smooth boundary, while $\overrightarrow a: \Omega\rightarrow
\mathbb{R}^{N}$ and $p: \Omega\rightarrow \mathbb{R}$ are functions
of class $C^{1}$, satisfying for some $a_0>0$
\begin{equation}\label{rada}
{\rm div}\,(\overrightarrow a(x))\geq a_{0}>0, \ \ \mbox{for all}\ x\in \Omega,
\end{equation}
provided that
\begin{equation}\label{radp}
\overrightarrow a(x)\cdot\nabla p(x)=0 ,\ \ \mbox{for all}\ x\in \Omega.
\end{equation}

In this paper, we establish a more involved version of inequality \eqref{cafrad}, which combines the contributions of several quantities.
 In order to introduce the main abstract result of the paper, we assume that  $\Omega\subset\RR^N$ ($N\geq 2$) is a bounded
domain with smooth boundary and
$a,p:\overline{\Omega}\rightarrow\RR$ are given functions such that
the following hypotheses are fulfilled:

(A) $a$ is a function of class $C^{1}$ and there exist $x_{0}\in
\Omega$, $r>0$,  $s \in (1,+\infty)$ such that
$$|a(x)|\neq 0, \ \ \mbox{for all}\ x\in \overline{\Omega}\setminus \{x_{0}\} \ \ \mbox{and} \ \  |a(x)|\geq |x-x_{0}|^{s}, \ \ \mbox{for all}\ x\in B(x_{0},r);\ \mbox{and}$$

(P) $p$ is a function of class $C^{1}$ and $p(x)\in (2,N)$ for all
$x\in\Omega$.

The main abstract result of this paper is the following
weighted Caffarelli-Kohn-Nirenberg inequality.

\begin{thm}\label{caf}
Assume that conditions $(A)$ and $(P)$ hold. Let 
$\Omega\subset\RR^N$ ($N\geq 2$) be a bounded domain with smooth
boundary. Then there exists a positive constant $\beta$ such that
\begin{align*}
\displaystyle \int_{\Omega}|a(x)|^{p(x)}|u(x)|^{p(x)}dx &\leq \beta
 \int_{\Omega}|a(x)|^{p(x)-1}||\nabla a(x)||u(x)|^{p(x)}dx\\
&+\beta\left(\displaystyle \int_{\Omega}|a(x)|^{p(x)}|\nabla
u(x)|^{p(x)}dx+\displaystyle \int_{\Omega}|a(x)|^{p(x)}|\nabla p(x)|
|u(x)|^{p(x)+1}dx\right)\\
&+\beta \displaystyle \int_{\Omega}|a(x)|^{p(x)-1}|\nabla p(x)|
|u(x)|^{p(x)-1}dx.
\end{align*}
 for all $u\in C_{c}^{1}(\Omega)$.
\end{thm}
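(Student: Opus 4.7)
The plan is to apply the divergence theorem to the vector field $\vec V(x):=(x-x_0)\,|a(x)|^{p(x)}|u(x)|^{p(x)}$. Since $u\in C_c^1(\Omega)$ the boundary term vanishes, and $\mathrm{div}(x-x_0)=N$ gives
\[
N\int_\Omega|a|^{p(x)}|u|^{p(x)}\,dx=-\int_\Omega(x-x_0)\cdot\nabla\bigl(|a|^{p(x)}|u|^{p(x)}\bigr)\,dx.
\]
Writing $|a|^{p(x)}|u|^{p(x)}=\exp\bigl(p(x)[\log|a|+\log|u|]\bigr)$ and differentiating produces, after combining the two logarithmic pieces that share the common prefactor $|a|^{p}|u|^{p}\,\nabla p$,
\[
\nabla\bigl(|a|^{p}|u|^{p}\bigr)=p|u|^{p}|a|^{p-1}\mathrm{sgn}(a)\,\nabla a + p|a|^{p}|u|^{p-1}\mathrm{sgn}(u)\,\nabla u + |a|^{p}|u|^{p}\log|au|\,\nabla p.
\]
The crucial observation is this fusion of the two logarithmic contributions into a single $\log|au|$ term; this collapse from four logarithmic summands to two is exactly what matches the two $|\nabla p|$-integrals on the RHS of the theorem.

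Taking absolute values and using $|x-x_0|\le D:=\mathrm{diam}(\Omega)$ together with $p(x)\le N$ from (P), I would arrive at
\begin{align*}
N\int|a|^{p}|u|^{p}\,dx \le\;& DN\int|a|^{p-1}|\nabla a||u|^{p}\,dx + DN\int|a|^{p}|u|^{p-1}|\nabla u|\,dx\\
&+ D\int|a|^{p}|u|^{p}\,\bigl|\log|au|\bigr|\,|\nabla p|\,dx.
\end{align*}
For the $|\nabla u|$ integral I would apply Young's inequality $|u|^{p-1}|\nabla u|\le\varepsilon|u|^{p}+C_\varepsilon|\nabla u|^{p}$ (with $C_\varepsilon$ uniform in $x$ since $p$ is bounded on $\overline\Omega$), choosing $\varepsilon$ small enough that the $\varepsilon$-piece is absorbed into the left-hand side. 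For the logarithmic integral I would invoke the elementary bound $|\log t|\le t+t^{-1}$ at $t=|au|$ to obtain
\[
|a|^{p}|u|^{p}\bigl|\log|au|\bigr|\le |a|^{p+1}|u|^{p+1}+|a|^{p-1}|u|^{p-1},
\]
and then use $|a|^{p+1}\le\|a\|_{L^{\infty}(\overline\Omega)}|a|^{p}$ to turn the first summand into a multiple of the third RHS integral of the theorem, while the second summand is exactly its fourth RHS integral. Collecting constants into a single $\beta$ completes the estimate.

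The step I expect to be the main obstacle is justifying the chain-rule expansion and divergence theorem near the (at most unique) zero $x_0$ of $a$. Since $a\in C^1(\overline\Omega)$ and $p(x)>2$ by (P), Taylor's expansion near $x_0$ gives $|a(x)|^{p(x)}=O(|x-x_0|^{p(x)})$ with $p(x)>2$, so $\nabla(|a|^{p(x)})$ extends continuously by zero at $x_0$ and $\vec V\in C^1(\Omega)$; alternatively one may work on $\Omega\setminus B(x_0,\delta)$ and let $\delta\to 0^{+}$, the surface term on $\partial B(x_0,\delta)$ being of order $\delta^{p(x_0)+N-1}\to 0$. The polynomial lower bound in (A), together with the bound $|a|\neq 0$ away from $x_0$, ensures that the $\log|au|$ term is finite almost everywhere, so that all the intermediate pointwise manipulations remain meaningful.
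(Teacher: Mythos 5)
Your proposal is correct and follows essentially the same route as the paper: the paper applies the divergence theorem to $|a(x)u(x)|^{p(x)}W(x)$ with $W(x)=x$ (your shift by $x_0$ is immaterial since the divergence is $N$ either way), expands by the chain rule to get the same merged $\log|au|$ term, controls it by the elementary bound of Lemma \ref{log} (your $|\log t|\le t+t^{-1}$ plays the role of the constants $\alpha_1,\alpha_2$ there, followed by the same $\|a\|_{L^\infty}$ absorption), and handles the gradient term by the same Young-inequality absorption into the left-hand side. Your extra care about regularity of $|a|^{p(x)}|u|^{p(x)}$ near $x_0$ and the zeros of $u$ only makes explicit what the paper calls a straightforward computation.
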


 We point out that, by hypotheses $(A)$ and $(P)$, the potential $\nabla
a$ can vanish in $\Omega$ and we do not assume that $\nabla p(x)\cdot
a(x)=0$, for all $x\in \Omega$ (see assumption \eqref{radp} related to inequality \eqref{rada}).

Next, we are concerned with the existence of infinitely many
solutions for the problem
\begin{equation}\label{imain}
  \left\{\begin{array}{ll} &\displaystyle -{\rm div}\,(B(x)|\nabla
u|^{p(x)-2}\nabla
u)+(A(x)|u|^{p(x)-2}+C(x)|u|^{p(x)-3})u=\\
&\displaystyle (b(x)|u|^{q(x)-2}-D(x)|u|^{p(x)-1})u
\quad\mbox{in
}\phantom{\partial}\Omega,\\
&u=0 \quad\mbox{on}\ \partial\Omega\,,
\end{array}\right.
\end{equation}
where the variable exponent $q$ fulfills a subcritical condition (namely, in the sense of Sobolev-type embeddings for spaces with variable exponent).
 We assume that $b:\Omega\rightarrow\RR$ and  the weighted potentials $A$, $B$, $C$, $D$ are defined by
 \begin{equation}\label{ABCD}
 \left\{\begin{array}{ll}
 &\displaystyle A(x)=|a(x)|^{p(x)-1}|\nabla a(x)|\\
 &\displaystyle B(x)=|a(x)|^{p(x)}\\
 &\displaystyle C(x)=|a(x)|^{p(x)-1}|\nabla p(x)|\\
 &\displaystyle D(x)=B(x)|\nabla p(x)|.
 \end{array}\right.
 \end{equation}
 

The potential $b$ is assumed to satisfy the following hypothesis:

\smallskip
(B) $b\in L^{\infty}(\Omega)$ and $b>0$ in $\Omega$.

\smallskip
In the final part of this paper, by combining our generalized Caffarelli-Kohn-Nirenberg inequality with
a variant of the fountain theorem, we shall
prove that problem \eqref{imain} has infinitely many
solutions.

\section{Terminology and the abstract setting}
In this section we recall some basic definitions and properties
concerning the Lebesgue and Sobolev spaces with variable exponent. We
refer to \cite{Fan, radrep} and the references
therein.

Consider the set
$$C_+(\overline\Omega)=\{p\in C(\overline\Omega);\;p(x)>1\;{\rm
for}\; {\rm all}\;x\in\overline\Omega\}.$$ For all $p\in
C_+(\overline\Omega)$ we define
    $$p^+=\sup_{x\in\Omega}p(x)\qquad\mbox{and}\qquad p^-=
    \inf_{x\in\Omega}p(x).$$ For any $p\in C_+(\overline\Omega)$, we
    define the {\it variable exponent Lebesgue space}
    $$L^{p(x)}(\Omega)=\left\{u;\ u\ \mbox{is
measurable real-valued function such that }
 \int_\Omega|u(x)|^{p(x)}\;dx<\infty\right\}.$$ This vector space is
a Banach space if it is endowed with the {\it Luxemburg norm}, which
 is defined by
 $$|u|_{p(x)}=\inf\left\{\mu>0;\;\int_\Omega\left|
 \frac{u(x)}{\mu}\right|^{p(x)}\;dx\leq 1\right\}.$$ The function
 space $L^{p(x)}(\Omega)$ is reflexive if and only if $1 < p^-\leq
 p^+<\infty$
  and continuous functions with compact support
 are dense in $L^{p(x)}(\Omega)$ if $p^{+}<\infty$.

 Let $L^{q(x)}(\Omega)$ denote the conjugate space of
 $L^{p(x)}(\Omega)$, where $1/p(x)+1/q(x)=1$. If $u\in
 L^{p(x)}(\Omega)$ and $v\in L^{q(x)}(\Omega)$ then  the following
 H\"older-type inequality holds:
 \begin{equation}\label{Hol}
 \left|\int_\Omega uv\;dx\right|\leq\left(\frac{1}{p^-}+
 \frac{1}{q^-}\right)|u|_{p(x)}|v|_{q(x)}\,.
 \end{equation}
 Moreover, if $p_j\in C_+(\overline\Omega)$ ($j=1,2,3$) and
 $$\frac{1}{p_1(x)}+\frac{1}{p_2(x)}+\frac{1}{p_3(x)}=1$$
 then for all $u\in L^{p_1(x)}(\Omega)$, $v\in L^{p_2(x)}(\Omega)$,
 $w\in L^{p_3(x)}(\Omega)$
 \begin{equation}\label{Hol1}
 \left|\int_\Omega uvw\;dx\right|\leq\left(\frac{1}{p_1^-}+
 \frac{1}{p_2^-}+\frac{1}{p_3^-}\right)|u|_{p_1(x)}|v|_{p_2(x)}|w|_{p_3(x)}\,.
 \end{equation}

 The inclusion between Lebesgue spaces also generalizes the classical
 framework, namely if $0 <|\Omega|<\infty$ and $p_1$, $p_2$ are
 variable exponents such that $p_1\leq p_2$
  in $\Omega$ then there exists a continuous embedding
 $L^{p_2(x)}(\Omega)\hookrightarrow L^{p_1(x)}(\Omega)$.
 \begin{prp}\label{pr1}
 If we denote
 $$\rho(u)=\displaystyle \int_{\Omega}|u|^{p(x)}dx, \ \ \forall u\in L^{p(x)}(\Omega),$$
 then\\
 $(i)$ $|u|_{p(x)}<1 (=1;>1)\Leftrightarrow \rho(u)<1(=1;>1)$;\\
 $(ii)$ $|u|_{p(x)}>1 \Rightarrow |u|_{p(x)}^{p^{-}}\leq \rho(u) \leq
 |u|_{p(x)}^{p^{+}}$;\\
 $(iii)$ $|u|_{p(x)}<1 \Rightarrow |u|_{p(x)}^{p^{+}}\leq \rho(u)
 \leq |u|_{p(x)}^{p^{-}}$.
 \end{prp}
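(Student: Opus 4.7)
The plan is to reduce everything to the behavior of the single-variable function $\Phi(\lambda):=\rho(u/\lambda)$ for $\lambda>0$ (with a nonzero $u$), exploit its continuity and strict monotonicity to identify $|u|_{p(x)}$ as the unique root of $\Phi(\lambda)=1$, and then derive (ii), (iii), and the strict inequalities of (i) by comparing $\lambda^{p(x)}$ with $\lambda^{p^-}$ and $\lambda^{p^+}$. Throughout, hypothesis $p\in C_+(\overline{\Omega})$ with $1<p^-\leq p^+<\infty$ guarantees that every integrability and limit step below is legal.

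First I would dispose of the trivial case $u\equiv 0$, where $|u|_{p(x)}=0$ and $\rho(u)=0$, so all three items hold vacuously or trivially. Next, fix $u\in L^{p(x)}(\Omega)$ with $u\neq 0$ and study
\[
\Phi(\lambda)=\int_{\Omega}\frac{|u(x)|^{p(x)}}{\lambda^{p(x)}}\,dx,\qquad \lambda>0.
\]
I would verify that: (a) $\Phi$ is finite for all large $\lambda$ (from $u\in L^{p(x)}(\Omega)$ and the dominated convergence theorem applied to the bound $\lambda^{-p(x)}\leq \max(\lambda^{-p^-},\lambda^{-p^+})$); (b) $\Phi$ is continuous on $(0,\infty)$ (again by dominated convergence, since for $\lambda$ in a compact subinterval of $(0,\infty)$ the integrand is dominated by a constant multiple of $|u|^{p(x)}$); (c) $\Phi$ is strictly decreasing in $\lambda$ on the set where it is finite, because $\lambda\mapsto\lambda^{-p(x)}$ is strictly decreasing pointwise and $\{x:u(x)\neq 0\}$ has positive measure; and (d) $\Phi(\lambda)\to 0$ as $\lambda\to\infty$ and $\Phi(\lambda)\to\infty$ as $\lambda\to 0^+$, again via monotone/dominated convergence. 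By the intermediate value theorem there exists a unique $\lambda_0>0$ with $\Phi(\lambda_0)=1$. From the definition of the Luxemburg norm as an infimum and the closedness of the sublevel set $\{\lambda:\Phi(\lambda)\leq 1\}=[\lambda_0,\infty)$, one concludes $|u|_{p(x)}=\lambda_0$ and, importantly, $\rho\bigl(u/|u|_{p(x)}\bigr)=1$.

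From the identity $\rho(u/|u|_{p(x)})=1$, parts (ii) and (iii) are immediate: with $\lambda=|u|_{p(x)}$,
\[
1=\int_{\Omega}\frac{|u(x)|^{p(x)}}{\lambda^{p(x)}}\,dx.
\]
If $\lambda>1$, then $\lambda^{p^-}\leq \lambda^{p(x)}\leq \lambda^{p^+}$ a.e., so $\lambda^{-p^+}\rho(u)\leq 1\leq \lambda^{-p^-}\rho(u)$, which rearranges to $\lambda^{p^-}\leq \rho(u)\leq \lambda^{p^+}$, proving (ii). If $\lambda<1$, the chain of inequalities reverses and yields (iii).

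Part (i) then follows by bootstrapping: the case $|u|_{p(x)}=1\Leftrightarrow\rho(u)=1$ is the equality $\rho(u)=\rho(u/1)=1$ just established at $\lambda_0=1$. For the strict inequalities, if $|u|_{p(x)}>1$, then (ii) gives $\rho(u)\geq |u|_{p(x)}^{p^-}>1$; if $|u|_{p(x)}<1$, then (iii) gives $\rho(u)\leq |u|_{p(x)}^{p^-}<1$. Conversely, if $\rho(u)>1$ we rule out $|u|_{p(x)}\leq 1$ by the two implications just proved (each would force $\rho(u)\leq 1$), and symmetrically for $\rho(u)<1$. The only step requiring a genuine argument rather than routine manipulation is the identification $\rho(u/|u|_{p(x)})=1$, which rests on the continuity and strict monotonicity of $\Phi$; that is the natural place where the finiteness assumption $p^+<\infty$ (implicit in $p\in C_+(\overline\Omega)$ on a bounded $\Omega$) is essential.
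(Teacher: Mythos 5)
Your proposal is correct. Note, however, that the paper does not prove Proposition \ref{pr1} at all: it is recalled in the preliminaries as a standard fact about the modular $\rho$ and the Luxemburg norm, with the reader referred to \cite{Fan, radrep}. So there is no in-paper argument to compare against; your argument via $\Phi(\lambda)=\rho(u/\lambda)$ is precisely the classical textbook proof. All the key steps are sound: finiteness of $\Phi$ on $(0,\infty)$ (which in fact holds for every $\lambda>0$, not merely large $\lambda$, since $\lambda^{-p(x)}\leq\max(\lambda^{-p^-},\lambda^{-p^+})$ and $\rho(u)<\infty$, using $p^+<\infty$), continuity and strict monotonicity, the limits at $0^+$ and $\infty$, and hence the identification of $|u|_{p(x)}$ as the unique root of $\Phi=1$ with the modular attaining the value $1$ there --- which is indeed the one nontrivial point, from which (i)--(iii) follow by the elementary comparison $\min(\lambda^{p^-},\lambda^{p^+})\leq\lambda^{p(x)}\leq\max(\lambda^{p^-},\lambda^{p^+})$ exactly as you carry it out.
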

 \begin{prp}\label{pr2}
 If $u,u_{n}\in L^{p(x)}(\Omega)$ and  $n\in \mathbb{N}$, then the
 following statements are equivalent:\\
 $(1)$ $\displaystyle \lim_{n\rightarrow +\infty}
 |u_{n}-u|_{p(x)}=0$;\\
 $(2)$  $\displaystyle \lim_{n\rightarrow +\infty} \rho(
 u_{n}-u)=0;$\\
 $(3)$ $u_{n}\rightarrow u$ in measure on $\Omega$ and
 $\displaystyle \lim_{n\rightarrow +\infty}\rho(u_{n})=\rho(u).$
 \end{prp}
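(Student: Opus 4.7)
The plan is to establish the cycle $(1) \Leftrightarrow (2)$, $(2) \Rightarrow (3)$, $(3) \Rightarrow (2)$. The first equivalence is a direct reading of Proposition~\ref{pr1}, applied to $v_n := u_n - u$. If $|v_n|_{p(x)} \to 0$, then eventually $|v_n|_{p(x)} < 1$, and part (iii) yields $\rho(v_n) \leq |v_n|_{p(x)}^{p^-} \to 0$. Conversely, if $\rho(v_n) \to 0$, then eventually $\rho(v_n) < 1$, so by part (i) $|v_n|_{p(x)} < 1$, and part (iii) again gives $|v_n|_{p(x)} \leq \rho(v_n)^{1/p^+} \to 0$.

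For $(2) \Rightarrow (3)$, convergence in measure follows from a Chebyshev-type estimate: for $\delta \in (0,1)$,
$$|\{x \in \Omega : |u_n(x) - u(x)| > \delta\}| \leq \int_{\{|u_n - u| > \delta\}} \frac{|u_n - u|^{p(x)}}{\delta^{p(x)}}\,dx \leq \frac{\rho(u_n - u)}{\delta^{p^+}} \longrightarrow 0,$$
using $\delta^{p(x)} \geq \delta^{p^+}$ on $\Omega$. For $\rho(u_n) \to \rho(u)$, I would use the elementary pointwise inequality $\bigl||A+B|^t - |A|^t\bigr| \leq \varepsilon\,|A|^t + C_{\varepsilon,t}\,|B|^t$, valid for every $t \geq 1$ and $\varepsilon > 0$, applied with $t = p(x)$, $A = u$, $B = u_n - u$. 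Integrating over $\Omega$ yields $|\rho(u_n) - \rho(u)| \leq \varepsilon\,\rho(u) + C_\varepsilon\,\rho(u_n - u)$; sending $n \to \infty$ and then $\varepsilon \to 0$ completes the implication.

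For $(3) \Rightarrow (2)$ --- the main obstacle --- I would argue by contradiction via a Brezis--Lieb-type identity adapted to variable exponents. Suppose $\rho(u_n - u) \not\to 0$. Pass to a subsequence along which $\rho(u_n - u) \geq \eta > 0$, and then (by convergence in measure) to a further subsequence $u_{n_k} \to u$ almost everywhere on $\Omega$. Using the same splitting inequality as above, now with $A = u_{n_k} - u$ and $B = u$, together with Fatou's lemma and dominated convergence (the $\varepsilon\,|A|^{p(x)}$ piece is controlled by the bounded modulars $\rho(u_n)$, and the $C_\varepsilon\,|B|^{p(x)} = C_\varepsilon|u|^{p(x)}$ piece is a fixed $L^1$ dominator), one obtains the variable-exponent Brezis--Lieb identity
$$\lim_{k \to \infty} \bigl[\rho(u_{n_k}) - \rho(u_{n_k} - u)\bigr] = \rho(u).$$
Combined with hypothesis $\rho(u_{n_k}) \to \rho(u)$, this forces $\rho(u_{n_k} - u) \to 0$, contradicting $\rho(u_{n_k} - u) \geq \eta$.

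The crux of the argument is that the modular $\rho$ is not homogeneous under a non-constant $p(x)$, so the classical Brezis--Lieb lemma does not transport verbatim; the fix is the $\varepsilon$-splitting inequality, which absorbs the defect of homogeneity into an arbitrarily small multiple of $\rho(u)$ plus an $L^1$-dominated tail. That the constant $C_{\varepsilon,t}$ stays uniform in $x$ requires $p(x)$ to be bounded on $\overline{\Omega}$, which is automatic from $p \in C_+(\overline{\Omega})$ together with the compactness of $\overline{\Omega}$.
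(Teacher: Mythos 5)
The paper itself offers no proof of Proposition \ref{pr2}: it is recalled as a known fact from \cite{Fan} and \cite{radrep}, so there is no in-text argument to compare against. Your proof is correct and self-contained. The equivalence $(1)\Leftrightarrow(2)$ via Proposition \ref{pr1} is exactly the standard reduction (note it uses $1<p^-\le p^+<\infty$, which holds because $p\in C_+(\overline\Omega)$ and $\overline\Omega$ is compact); the Chebyshev estimate for convergence in measure and the $\varepsilon$-splitting inequality $\bigl||A+B|^t-|A|^t\bigr|\le\varepsilon|A|^t+C_{\varepsilon,t}|B|^t$ (with $C_{\varepsilon,t}$ uniform for $t\in[p^-,p^+]$) correctly handle the failure of homogeneity of the modular, both for the continuity statement in $(2)\Rightarrow(3)$ and for the Brezis--Lieb-type identity in $(3)\Rightarrow(2)$. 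Two points worth making explicit in a polished write-up: the extraction of an a.e.-convergent subsequence from convergence in measure uses $|\Omega|<\infty$, which holds since $\Omega$ is bounded; and in the dominated-convergence step of $(3)\Rightarrow(2)$ one should record that $\rho(u_{n_k}-u)\le 2^{p^+-1}\bigl(\rho(u_{n_k})+\rho(u)\bigr)$ is bounded because $\rho(u_{n_k})\to\rho(u)$ by hypothesis, so the $\varepsilon$-term really does vanish after letting $k\to\infty$ and then $\varepsilon\to 0$. With those details filled in, the argument is complete and consistent with the proofs in the cited literature.
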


  If $k$ is a positive integer and $p\in C_+(\overline\Omega)$, then we define the variable exponent  Sobolev space by
  $$
  W^{k,p(x)}(\Omega)=\{u\in L^{p(x)}(\Omega);\; D^\alpha u\in
  L^{p(x)} (\Omega),\ \mbox{for all}\ |\alpha|\leq k \}.
  $$
  Here $\alpha =(\alpha_1,\ldots ,\alpha_N)$ is a multi-index, $|\alpha|=\sum_{i=1}^N\alpha_i$, and
  $$D^\alpha u=\frac{\partial^{|\alpha|}u}{\partial^{\alpha_1}_{x_1}\ldots \partial^{\alpha_N}_{x_N}}\,.$$

  On $W^{k,p(x)}(\Omega)$ we  consider the following norm
  $$
  \|u\|_{k,p(x)}=\sum_{|\alpha|\leq k}|D^\alpha u|_{p(x)}.
  $$
  Then $W^{k,p(x)}(\Omega)$ is a reflexive and separable Banach space if $1<p^{-}\leq p^{+}<+\infty$. Let $W^{k,p(x)}_0(\Omega)$
  denote the closure of $C^\infty_0(\Omega)$ in $W^{k,p(x)}(\Omega)$.

The  Lebesgue and Sobolev
spaces with variable exponents coincide with the usual Lebesgue and Sobolev spaces provided that $p$ is constant.
 According to R\u adulescu and Repov\v{s} \cite[pp. 8-9]{radrep}, these function spaces have some unusual properties,
such as:

(i)
Assuming that $1<p^-\leq p^+<\infty$ and $p:\overline\Omega\rightarrow [1,\infty)$ is a smooth function,  the following co-area formula
$$\int_\Omega |u(x)|^pdx=p\int_0^\infty t^{p-1}\,|\{x\in\Omega ;\ |u(x)|>t\}|\,dt$$
has  no analogue in the framework of variable exponents.

(ii) Spaces $L^{p(x)}$ do {\it not} satisfy the {\it mean continuity property}. More exactly, if $p$ is nonconstant and continuous in an open ball  $B(x_0)$,
then there is some $u\in L^{p(x)}(B(x_0))$ such that  $u(x+h)\not\in L^{p(x)}(B(x_0+h))$ for every $h\in\RR^N$ with arbitrary small norm.

(iii) Function spaces with variable exponent
 are {\it never} invariant with respect to translations.  The convolution is also limited. For instance,  the classical Young inequality
$$| f*g|_{p(x)}\leq C\, | f|_{p(x)}\, \| g\|_{L^1}$$
remains valid if and only if
$p$ is constant.

\section{Weighted Caffarelli-Kohn-Nirenberg inequality for $p(x)$-Laplacian}

We start with the following weighted logarithmic inequality.
\begin{lemma}\label{log}
Let condition $(P)$ be satisfied. Then there exists a positive
constant $\mu$ such that $$
\int_{{\rm supp}\,(u)}|\nabla p(x)||u(x)|^{p(x)}|\log(|u(x)|)|dx \leq \mu
\displaystyle \int_{\Omega}|\nabla
p(x)|\left(|u(x)|^{p(x)-1}+|u(x)|^{p(x)+1}\right)dx,$$
 for all $u\in C_{c}^{1}(\Omega).$
\end{lemma}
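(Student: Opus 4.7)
The plan is to reduce the inequality to an elementary pointwise estimate for the map $t \mapsto t\log t$ on $(0,\infty)$, then multiply by $|\nabla p(x)|$ and integrate. Since the integrand vanishes where $u(x)=0$, I split the support of $u$ into the two regions $\Omega_1 = \{x \in \Omega : 0 < |u(x)| \leq 1\}$ and $\Omega_2 = \{x \in \Omega : |u(x)| > 1\}$, and treat them separately.

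On $\Omega_2$, where $|u(x)| > 1$, the logarithm is nonnegative, so I use the classical bound $\log t \leq t$ (valid for all $t>0$), which gives $|u(x)|^{p(x)} |\log|u(x)|| \leq |u(x)|^{p(x)+1}$ pointwise. Multiplying by $|\nabla p(x)|$ and integrating over $\Omega_2$ produces the second term on the right-hand side.

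On $\Omega_1$, the logarithm is nonpositive, so $|\log|u(x)|| = -\log|u(x)|$. Here I exploit the fact that the real function $t \mapsto -t\log t$ is continuous on $(0,1]$, extends continuously to $0$, and attains its maximum $1/e$ at $t=1/e$, so that $|u(x)| \cdot |\log|u(x)|| \leq 1/e$ on $\Omega_1$. Factoring one power of $|u|$:
\begin{equation*}
|u(x)|^{p(x)} |\log|u(x)|| = |u(x)|^{p(x)-1} \cdot \bigl(|u(x)| \cdot |\log|u(x)||\bigr) \leq \frac{1}{e}\,|u(x)|^{p(x)-1},
\end{equation*}
which, after multiplying by $|\nabla p(x)|$ and integrating over $\Omega_1$, produces the first term on the right-hand side.

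Adding the two contributions and enlarging the domain of integration from $\Omega_1 \cup \Omega_2$ to all of $\Omega$ yields the claim with $\mu = 1$ (or any convenient upper bound for $\max\{1, 1/e\}$). I do not foresee a genuine obstacle: condition $(P)$ enters only to ensure that $\nabla p$ is well defined and that the exponents $p(x)-1,\, p(x),\, p(x)+1$ are all positive, so that the elementary inequalities used above apply pointwise. The essentially analytic content of the lemma is the standard remark that $t\log t$ is bounded near $0$ and dominated by $t^2$ at infinity.
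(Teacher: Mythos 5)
Your proof is correct and follows essentially the same route as the paper: the same split of the support into $\{|u|\leq 1\}$ and $\{|u|>1\}$, with the pointwise bounds $t|\log t|\leq 1/e$ on $(0,1]$ and $\log t\leq t$ for $t>1$ playing the role of the paper's constants $\alpha_1$ and $\alpha_2$ (which the paper leaves as suprema rather than computing explicitly).
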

\begin{proof}
Let $u\in C_{c}^{1}(\Omega)$. We define
$$\alpha_{1}=\displaystyle \sup_{0<t\leq 1} t|\log(t)|<\infty \ \ \mbox{and} \ \ \alpha_{2}=\displaystyle \sup_{1<t} t^{-1}\log(t)<\infty.$$
We observe that $0<\alpha_{1}<+\infty$ and
$0<\alpha_{2}<+\infty.$ Let
$$\Omega_{1}=\{x\in {\rm supp}\,(u); \ \ |u(x)|\leq 1\} \ \ \mbox{and} \ \ \Omega_{2}=\{x\in {\rm supp}\, (u); \ \ |u(x)|> 1\}.$$
Then
$$\begin{array}{ll}
\displaystyle \int_{{\rm supp}\,(u)}|\nabla
p(x)||u(x)|^{p(x)}|\log(|u(x)|)|dx &= \displaystyle
\int_{\Omega_{1}}|\nabla p(x)||u(x)|^{p(x)}|\log(|u(x)|)|dx \\
&+\displaystyle
\int_{\Omega_{2}}|\nabla p(x)||u(x)|^{p(x)}|\log(|u(x)|)|dx \\
&\leq \alpha_{1} \displaystyle \int_{\Omega_{1}}|\nabla
p(x)||u(x)|^{p(x)-1}dx\nonumber\\&+\alpha_{2}\displaystyle
\int_{\Omega_{2}}|\nabla p(x)||u(x)|^{p(x)+1}dx\\
&\leq \mu(\displaystyle \int_{\Omega}|\nabla p(x)||u(x)|^{p(x)-1}dx\\
&+ \displaystyle \int_{\Omega}|\nabla p(x)||u(x)|^{p(x)+1}dx),
\end{array}
$$
 where $\mu=\max(\alpha_{1},\alpha_{2})$. This proves the lemma.
\end{proof}

\textbf{Proof of Theorem} \ref{caf}.  We prove in what
follows our weighted version of the Caffarelli-Kohn-Nirenberg inequality
with variable exponent.

 We define the function $W:
\mathbb{R}^{N} \rightarrow \mathbb{R}^{N}$ by $W(y)=y$ for all $y\in
\mathbb{R}^{N}.$ We choose $\epsilon>0$ small enough so that
\begin{equation}\label{epslon}
0<\epsilon <\frac{N}{p^{+}\|W\|_{L^{\infty}(\Omega)}}.
\end{equation}
 By a straightforward
computation we can deduce that for all $u\in C_{c}^{1}(\Omega)$ we have
\begin{align}\label{comp}
{\rm div}\,(|a(x)u(x)|^{p(x)} W(x))&=|a(x)|^{p(x)} |u(x)|^{p(x)}
{\rm div}\,(W(x))\\
&+\displaystyle p(x)|a(x)|^{p(x)}|u(x)|^{p(x)-2} u(x) \nabla
u(x)\cdot W(x)\nonumber\\
&+p(x)|u(x)|^{p(x)} |a(x)|^{p(x)-2}a(x) \nabla a(x)\cdot W(x)\nonumber \\
&+|u(x)a(x)|^{p(x)} \log(|a(x)u(x)|) \nabla p(x).W(x), \ \ \forall
x\in \Omega.
\end{align}
Now the flux-divergence theorem implies that $\displaystyle
\int_{\Omega}{\rm div}\,(|a(x)u(x)|^{p(x)} W(x))dx=0$. It follows from Lemma
\ref{log} and conditions $(A)$ and $(P)$, that
\begin{align}\label{21}
\displaystyle \int_{\Omega}|a(x)u(x)|^{p(x)} {\rm div}\,(W(x))dx &\leq
p^{+}\displaystyle \int_{\Omega}|u(x)|^{p(x)}|a(x)|^{p(x)-1}|\nabla
a(x)|
|W(x)|dx \nonumber\\
&+\displaystyle
\int_{\Omega}|a(x)u(x)|^{p(x)}|\log(|u(x)a(x)|)||\nabla
p(x)||W(x)|dx \nonumber\\
&+p^{+}\displaystyle
\int_{\Omega}|u(x)|^{p(x)-1}|a(x)|^{p(x)}|\nabla
u(x)| |W(x)|dx \nonumber \\
&\leq p^{+}\|W\|_{L^{\infty}(\Omega)} \displaystyle
\int_{\Omega}|a(x)|^{p(x)-1}||\nabla a(x)| |u(x)|^{p(x)}dx \nonumber\\
&+\mu \|W\|_{L^{\infty}(\Omega)} \displaystyle
\int_{\Omega}|a(x)|^{p(x)-1}|\nabla p(x)||u(x)|^{p(x)-1}dx
\nonumber\\
&+\mu \|a\|_{L^{\infty}(\Omega)}\|W\|_{L^{\infty}(\Omega)}
\displaystyle \int_{\Omega}|a(x)|^{p(x)}|\nabla p(x)||u(x)|^{p(x)+1}dx \nonumber\\
&+  \epsilon p^{+}\|W\|_{L^{\infty}(\Omega)} \displaystyle
\int_{\Omega}|a(x)|^{p(x)}|u(x)|^{p(x)}dx \nonumber\\ &+
p^{+}\frac{\|W\|_{L^{\infty}(\Omega)}}{\epsilon^{p^{-}-1}}
\displaystyle \int_{\Omega}|a(x)|^{p(x)}|\nabla u(x)|^{p(x)}dx.
\end{align}
 Next, we combine ${\rm div}\,(W(x))=N$ in $\Omega$ with relation \eqref{21}
and the following Young inequality:
$$a^{p-1}b\leq\varepsilon a^p +\frac{b^p}{\varepsilon^{p-1}}\,,\quad\mbox{for all}\ a,b,\varepsilon\in(0,\infty),\ p\in (1,\infty).$$
It follows that

\begin{align}\label{22}
(N-p^{+}\|W\|_{L^{\infty}(\Omega)} \epsilon)\displaystyle
\int_{\Omega}|a(x)u(x)|^{p(x)}dx &\leq
p^{+}\frac{\|W\|_{L^{\infty}(\Omega)}}{\epsilon
^{p^{-}-1}}\displaystyle
\int_{\Omega}|a(x)|^{p(x)}|\nabla u(x)|^{p(x)}dx  \nonumber\\
&+p^{+}\|W\|_{L^{\infty}(\Omega)}\displaystyle
\int_{\Omega}|a(x)|^{p(x)-1}||\nabla
a(x)||u(x)|^{p(x)}dx \nonumber\\
&+ c\displaystyle \int_{\Omega}|a(x)|^{p(x)}|\nabla
p(x)||u(x)|^{p(x)+1}dx\\
&+\mu\|W\|_{L^{\infty}(\Omega)}\displaystyle
\int_{\Omega}|a(x)|^{p(x)-1}|\nabla p(x)||u(x)|^{p(x)-1}dx,
\end{align}
with $c=\mu \|a\|_{L^{\infty}(\Omega)}\|W\|_{L^{\infty}(\Omega)}$.
Invoking \eqref{epslon}, we  set
$$\beta=\frac{\max (c,p^{+}\frac{\|W\|_{L^{\infty}(\Omega)}}{\epsilon
^{p^{-}-1}},p^{+}
\|W\|_{L^{\infty}(\Omega)},\mu\|W\|_{L^{\infty}(\Omega)})}{(N-p^{+}\|W\|_{L^{\infty}(\Omega)}
\epsilon)}.$$

This completes the proof of Theorem \ref{caf}.\qed

\medskip
We denote by $W^{1,p(x)}_{0,a(x)}(\Omega)$ the closure of
$C_{c}^{1}(\Omega)$ under the norm
$$\begin{array}{ll}
\|u\|=&\displaystyle ||B(x)|^{\frac{1}{p(x)}}\nabla u(x)|_{p(x)}+|A(x)^{\frac{1}{p(x)}}u(x)|_{p(x)}+||D(x)|^{\frac{1}{p(x)+1}}u(x)|_{p(x)+1}+\\
&\displaystyle ||C(x)|^{\frac{1}{p(x)-1}}u(x)|_{p(x)-1},\end{array}$$
where the potentials $A$, $B$, $C$, $D$ are defined in \eqref{ABCD}. \\
(There is no modification since the norm on $L^{p(x)}
$ is denoted by $|\ |_{p(x)}$. But in the above equality the first and the fourth bars are
for the norm on $L^{p(x)}$, while the second and the third bars denote the absolute value of
$B$, $A$, $D$, and $C$.)

 As a corollary of Theorem \ref{caf},
we prove the following compactness property.

\begin{lemma}\label{com}
Assume that conditions $(A)$ and $(P)$ hold. Furthermore, assume
that $p^{-}>1+s$. Then $W^{1,p(x)}_{0,a(x)}(\Omega)$ is compactly
embedded in $L^{q}(\Omega)$ for each $q\in
(1,\frac{Np^{-}}{N+sp^{+}})$. Moreover, the same conclusion holds if we  replace $L^{q}(\Omega)$ by $L^{q(x)}(\Omega)$, provided that $q^+<
\frac{Np^{-}}{N+sp^{+}}$.
\end{lemma}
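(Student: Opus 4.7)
The strategy is a two-step reduction: first, continuously embed $W^{1,p(x)}_{0,a(x)}(\Omega)$ into a classical Sobolev space $W^{1,q}_0(\Omega)$ with constant exponent, and then invoke the Rellich--Kondrachov compactness theorem.

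For $u\in C_c^1(\Omega)$, I would write $|\nabla u|^q=(|a|\,|\nabla u|)^q\cdot|a|^{-q}$ and apply the classical Hölder inequality with constant conjugate exponents $p^-/q$ and $p^-/(p^--q)$, which is admissible because $q<Np^-/(N+sp^+)<p^-$:
\begin{equation*}
\int_\Omega|\nabla u|^q\,dx\le\left(\int_\Omega|a|^{p^-}|\nabla u|^{p^-}\,dx\right)^{q/p^-}\left(\int_\Omega|a|^{-qp^-/(p^--q)}\,dx\right)^{(p^--q)/p^-}.
\end{equation*}
The first factor is bounded above by $|\Omega|+\int_\Omega|a|^{p(x)}|\nabla u|^{p(x)}\,dx$ via the pointwise inequality $t^{p^-}\le 1+t^{p(x)}$; this integral is the $L^{p(x)}$-modular of $B(x)^{1/p(x)}\nabla u$, hence controlled by a power of $\|u\|$ through Proposition \ref{pr1}. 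For the second factor, hypothesis $(A)$ yields $|a(x)|^{-1}\le|x-x_0|^{-s}$ on $B(x_0,r)$, while $|a|^{-1}$ is bounded on the compact set $\overline\Omega\setminus B(x_0,r)$; hence finiteness reduces to $sqp^-/(p^--q)<N$, i.e.\ $q<Np^-/(N+sp^-)$, which is implied by the stronger hypothesis $q<Np^-/(N+sp^+)$.

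Combining this gradient estimate with the classical Poincaré inequality in $W^{1,q}_0(\Omega)$ (permissible because $q<p^-<N$), one obtains $\|u\|_{W^{1,q}_0}\le C\|u\|$ for every $u\in C_c^1(\Omega)$, which by density extends to a continuous embedding $W^{1,p(x)}_{0,a(x)}(\Omega)\hookrightarrow W^{1,q}_0(\Omega)$. The Rellich--Kondrachov theorem then supplies a compact embedding $W^{1,q}_0(\Omega)\hookrightarrow L^q(\Omega)$, and composition yields the first claim. For the variable exponent statement with $q^+<Np^-/(N+sp^+)$, I would apply the case already established with the constant exponent $q^+$ to get a compact embedding into $L^{q^+}(\Omega)$, and then compose with the continuous inclusion $L^{q^+}(\Omega)\hookrightarrow L^{q(x)}(\Omega)$, which holds on the bounded domain $\Omega$.

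The main technical obstacle I anticipate is the passage from the variable exponent modular $\int|a|^{p(x)}|\nabla u|^{p(x)}\,dx$ (which is what the norm $\|u\|$ genuinely controls) to its constant exponent surrogate $\int|a|^{p^-}|\nabla u|^{p^-}\,dx$ needed to invoke classical Hölder. The two-case splitting based on whether $|a|\,|\nabla u|\le 1$ bridges the gap at the cost of only an additive $|\Omega|$; verifying that the resulting integrability condition on $|a|^{-qp^-/(p^--q)}$ remains compatible with the range $q<Np^-/(N+sp^+)$ is the one place where care is required, and the computation above shows there is room to spare.
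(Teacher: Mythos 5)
Your proof is correct, but it follows a genuinely different route from the paper. You establish a global continuous embedding $W^{1,p(x)}_{0,a(x)}(\Omega)\hookrightarrow W^{1,q}_0(\Omega)$: H\"older with the constant conjugate pair $p^-/q$, $p^-/(p^--q)$ applied to $|\nabla u|^q=(|a|\,|\nabla u|)^q|a|^{-q}$, the pointwise bound $t^{p^-}\le 1+t^{p(x)}$ to pass from the variable modular to the constant-exponent integral, integrability of $|a|^{-qp^-/(p^--q)}$ from hypothesis $(A)$ (your computation correctly shows the needed condition $q<Np^-/(N+sp^-)$ is weaker than the stated $q<Np^-/(N+sp^+)$, so there is indeed room to spare), then Poincar\'e and a single application of Rellich--Kondrachov; the non-homogeneous estimate converts into the linear bound $\|u\|_{W^{1,q}_0}\le C\|u\|$ by the usual normalization, since the inclusion is linear. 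The paper argues differently: it takes a bounded sequence, splits $\Omega$ into $\Omega\setminus\overline B(x_0,\epsilon)$, where $|a|$ is bounded below so that Theorem \ref{caf} gives boundedness in $W^{1,p(x)}$ and hence in $W^{1,p^-}$, allowing classical compactness there, and the small ball $B(x_0,\epsilon)$, which is handled by variable-exponent H\"older together with the weight estimate, producing an $\epsilon$-power tail and a Cauchy argument. Your approach buys two things: it bypasses Theorem \ref{caf} entirely (using only the gradient component of the norm plus $(A)$) and avoids the $\epsilon$-splitting and diagonal bookkeeping, in fact delivering the stronger conclusion of a continuous embedding into a fixed constant-exponent Sobolev space; the paper's approach stays closer to the weighted structure and showcases the CKN inequality, which is the point of the article. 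Your treatment of the variable-exponent case (compactness into $L^{q^+}$ followed by the inclusion $L^{q^+}(\Omega)\hookrightarrow L^{q(x)}(\Omega)$ on the bounded domain) is also a valid shortcut compared with the paper's remark that the same proof applies verbatim.
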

\begin{proof}
Fix $q\in
(1,\frac{Np^{-}}{N+sp^{+}})$. Let $(u_{n})$ be a bounded sequence in
$W^{1,p(x)}_{0,a(x)}(\Omega)$. Since $x_{0}\in \Omega$, it follows
that there exists $\epsilon_{0}>0$ such that
$$0<\epsilon_{0}<\min(1,r) \; \mbox{and} \; \overline{B}(x_{0}, \epsilon_{0})\subset \Omega.$$
 Fix $\epsilon>0$ so that $\epsilon
<\epsilon_{0}$.  From condition $(A)$, there exists
$a_{0}>0$ such that $a(x)\geq a_{0}$, for all $x\in \Omega \setminus
\overline{B}(x_{0}, \epsilon) $. Hence, by invoking Theorem \ref{caf} we
deduce that the sequence $(u_{n})$ is bounded in $L^{p(x)}(\Omega
\setminus \overline{B}(x_{0}, \epsilon))$. Consequently, $(u_{n})$
is  bounded in $W^{1,p(x)}(\Omega\setminus \overline{B}(x_{0},
\epsilon) )$. Since $W^{1,p(x)}(\Omega\setminus \overline{B}(x_{0},
\epsilon) )\subset W^{1,p^{-}}(\Omega\setminus \overline{B}(x_{0},
\epsilon)) $ we deduce that $(u_{n})$ is bounded
 in $W^{1,p^{-}}(\Omega\setminus \overline{B}(x_{0},
\epsilon))$.
For all $s>0$ we have $Np^-/(N-p^-)>Np^-/(N+sp^+)$. Thus, since $1<q<Np^-/(N+sp^+)$, 
the classical compact embedding theorem shows that
there exists a convergent subsequence of $(u_{n})$, still denoted by
$(u_{n})$, in $L^{q}(\Omega\setminus \overline{B}(x_{0},
\epsilon))$. Thus, for any large enough $n$ and $m$ we have
\begin{equation}\label{31}
\displaystyle \int_{\Omega\setminus \overline{B}(x_{0},
\epsilon)}|u_{n}-u_{m}|^{q}dx <\epsilon.
\end{equation}
Now the H\"older inequality for variable exponent spaces implies
\begin{align}\label{32}
\displaystyle \int_{ B(x_{0}, \epsilon)}|u_{n}-u_{m}|^{q}dx
&=\displaystyle \int_{ B(x_{0},
\epsilon)}|a(x)|^{q}|a(x)|^{-q}|u_{n}-u_{m}|^{q}dx \nonumber\\
&\leq c \||a(x)|^{-q} \chi_{B(x_{0},
\epsilon)}\|_{(\frac{p(x)}{q})'} \||a(x)|^{q}
|u_{n}-u_{m}|^{q}\|_{\frac{p(x)}{q}},
\end{align}
where $c$ is a positive constant and
$(\frac{p(x)}{q})'=\frac{p(x)}{p(x)-q}$.
By Theorem \ref{caf} and Proposition \ref{pr1}, there exist
 positive constants $c_{1}$ and  $c_{2}$  such that
\begin{align}\label{33}
\||a(x)|^{q} |u_{n}-u_{m}|^{q}\|_{\frac{p(x)}{q}}&\leq
c_{1}(\displaystyle
\int_{\Omega}|a(x)|^{p(x)}|u_{n}-u_{m}|^{p(x)}dx)^{\frac{q}{p^{-}}}
\nonumber\\&+c_{1}(\displaystyle
\int_{\Omega}|a(x)|^{p(x)}|u_{n}-u_{m}|^{p(x)}dx)^{\frac{q}{p^{+}}}
\nonumber \\ &\leq c_{2}.
\end{align}
Taking into account relations \eqref{32} and \eqref{33} we deduce
that
\begin{equation}\label{34}
\displaystyle \int_{ B(x_{0}, \epsilon)}|u_{n}-u_{m}|^{q}dx \leq
c_{2} \||a(x)|^{-q} \chi_{B(x_{0},
\epsilon)}\|_{(\frac{p(x)}{q})'}.
\end{equation}
By invoking Proposition \ref{pr1}, we obtain
\begin{align}\label{35}
\||a(x)|^{-q} \chi_{B(x_{0},
\epsilon)}\|_{(\frac{p(x)}{q})'}&\leq (\displaystyle
\int_{\Omega}|a(x)|^{\frac{-qp(x)}{(p(x)-q)}}\chi_{B(x_{0},
\epsilon)}dx)^{((\frac{p(x)}{q})')^{+}} \nonumber \\
&+(\displaystyle \int_{\Omega}|a(x)|^{\frac{-qp(x)}{(p(x)-q)}}
\chi_{B(x_{0}, \epsilon)}dx)^{((\frac{p(x)}{q})')^{-}}.
\end{align}
Using condition $(A)$ and  $\epsilon <1$, we infer that
 \begin{align}\label{36}
\displaystyle
\int_{B(x_{0},\epsilon)}|a(x)|^{\frac{-qp(x)}{(p(x)-q)}}dx&\leq
\displaystyle \int_{B(0,\epsilon)}|x|^{\frac{-sqp^{+}}{(p^{-}-q)}}dx \nonumber\\
&= w_{n}\displaystyle
\int_{0}^{\epsilon}r^{N-1}r^{\frac{-sqp^{+}}{(p^{-}-q)}}dr\\
&= w_{n}\frac{\epsilon^{\alpha}}{\alpha},
\end{align}
where $\alpha=N-\frac{sqp^{+}}{(p^{-}-q)}>0$ and $w_{N}$ is the area
of the unit ball in $\mathbb{R}^{N}$. Thus, it follows from \eqref{31},
\eqref{34} and \eqref{36}  that
$$\displaystyle \int_{\Omega}|u_{n}-u_{m}|^{q}dx\leq c (\epsilon+\epsilon^{\alpha_{1}}+\epsilon^{\alpha_{2}}),$$
where $c$ is a positive constant,
$\alpha_{1}=((\frac{p(x)}{q})')^{-}\alpha$, and
$\alpha_{2}=((\frac{p(x)}{q})')^{+}\alpha$. We conclude that
$(u_{n})$ is a Cauchy sequence in $L^{q}(\Omega)$.

The same proof
still applies if we replace $L^{q}(\Omega)$ by $L^{q(x)}(\Omega)$. The conclusion of the lemma is now evident.
\end{proof}

\section{A multiplicity property for a problem with variable exponent}
In this section, we  work under conditions introduced in Lemma
\ref{com}. We investigate the existence of infinitely many solutions
of problem \eqref{imain}, where  $b \in L^{\infty}(\Omega)$ and
\begin{equation}\label{crucial} q(x) \in
\left(1,\min\left\{\frac{Np^{-}}{N+sp^{+}},p^{-}-1\right\}\right)\quad \mbox{for all $x\in \Omega$.}
\end{equation}

 We say that $u\in W^{1,p(x)}_{0,a(x)}(\Omega)$ is a {\it weak
solution} of problem \eqref{imain} if
\begin{align*}
\displaystyle \int_{\Omega} B(x)& |\nabla u(x)|^{p(x)-2} \nabla u(x)
\nabla v(x)dx + \displaystyle \int_{\Omega} A(x)|u(x)|^{p(x)-2}u(x)
v(x)dx \\ &+ \displaystyle \int_{\Omega} D(x)|u(x)|^{p(x)-1}u(x)
v(x)dx+\displaystyle \int_{\Omega} C(x)|u(x)|^{p(x)-3}u(x)v(x)dx\\&-
\int_{\Omega}b(x) |u(x)|^{q(x)-2}u(x) v(x)dx=0,
\end{align*}
for all $v\in  W^{1,p(x)}_{0,a(x)}(\Omega)$.

Standard argument can
be used  to show that $(W^{1,p(x)}_{0,a(x)}(\Omega),\| .
\|)$ is a reflexive Banach separable space. Then, by \cite{china},
there exist $\left(e_{n}\right)\subset W^{1,p(x)}_{0,a(x)}(\Omega)$
and $e_{n}^{\ast}\subset (W^{1,p(x)}_{0,a(x)}(\Omega))^{\ast}$ such
that
$$e_{n}^{\ast}\left(e_{m}\right)=1 \ \ \mbox{ if} \ \  n=m \ \ \mbox{ and} \ \
e_{n}^{\ast}\left(e_{m}\right)=0 \ \ \mbox{if} \ \ n\neq m.$$ It
follows that
$$W^{1,p(x)}_{0,a(x)}(\Omega)=\overline{\mbox{span}}\left\{e_{n}, \  n\geq 1\right\}
\ \ \mbox{and} \ \ (W^{1,p(x)}_{0,a(x)}(\Omega))^{\ast}=
\overline{\mbox{span}}\left\{e_{n}^{\ast}, \  n\geq 1 \right\}.$$
For any integer $k\geq1$, denote
$$E_{k}=\mbox{span}\left\{e_{k}\right\}, \ \ Y_{k}= \displaystyle \oplus_{j=1}^{k}E_{j}
 \ \ \mbox{and} \ \ Z_{k}=\overline{\displaystyle \oplus_{j=k}^{\infty}E_{j}}.$$

 The main result of this section is the following multiplicity property.

\begin{thm} \label{app}
Assume that $p^{-}>1+s$ and that conditions $(A)$, $(B)$ and $(P)$ are fulfilled.  Then problem
\eqref{imain} has infinitely many solutions.
\end{thm}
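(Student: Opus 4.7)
The plan is to realize \eqref{imain} as the Euler--Lagrange equation of an even $C^1$ functional on $X:=W^{1,p(x)}_{0,a(x)}(\Omega)$ and then invoke a $\mathbb Z_2$-equivariant minimax theorem (the announced fountain theorem variant) with respect to the decomposition $X=\overline{\bigoplus_{k\ge 1}E_k}$ already at hand. The natural choice is
\[
J(u)=\int_\Omega\!\left[\tfrac{B(x)}{p(x)}|\nabla u|^{p(x)}+\tfrac{A(x)}{p(x)}|u|^{p(x)}+\tfrac{D(x)}{p(x)+1}|u|^{p(x)+1}+\tfrac{C(x)}{p(x)-1}|u|^{p(x)-1}\right]dx-\int_\Omega\tfrac{b(x)}{q(x)}|u|^{q(x)}\,dx,
\]
every term of which is even in $u$. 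Verifying $J\in C^1(X,\RR)$ and that $J'(u)=0$ reproduces the weak formulation of \eqref{imain} is a termwise computation using the H\"older-type inequalities \eqref{Hol}, \eqref{Hol1} together with the Luxemburg-norm/modular equivalences of Proposition \ref{pr1}; the four positive weighted principal terms together produce precisely the quantity $\|u\|$ that defines the norm on $X$.

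Next I would verify a Palais--Smale condition. The key is coercivity of $J$: by Proposition \ref{pr1}, each of the four principal modular terms controls its corresponding piece of $\|u\|$ raised to a power at least $p^--1$ (the smallest such power coming from the $|u|^{p(x)-1}$-piece), while the compact embedding $X\hookrightarrow L^{q(x)}(\Omega)$ of Lemma \ref{com} bounds $\int b\,|u|^{q(x)}/q(x)\,dx$ by $C\|u\|^{q^+}$; the condition $q^+<p^--1$ from \eqref{crucial} is exactly what forces $J(u)\to+\infty$ as $\|u\|\to\infty$. Hence every (PS) sequence is bounded; its weak limit is upgraded to strong convergence in $L^{q(x)}$ by Lemma \ref{com}, and testing $\langle J'(u_n)-J'(u),u_n-u\rangle\to0$ combined with the strict monotonicity of each of the maps $t\mapsto|t|^{p(x)-2}t$, $t\mapsto|t|^{p(x)}t$ and $t\mapsto|t|^{p(x)-3}t$ gives an $(S_+)$-type conclusion and strong convergence $u_n\to u$ in $X$.

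For the geometry, on each finite-dimensional $Y_k$ all norms are equivalent, so for $u\in Y_k$ small the negative term $-\int b\,|u|^{q(x)}/q(x)\,dx$ dominates the principal terms (because $q^+<p^-$ and $b>0$), which produces $\sup_{Y_k\cap S_{r_k}}J<0$ for sufficiently small $r_k$. For the $Z_k$-side I would use the standard auxiliary quantity $\beta_k:=\sup\{|u|_{q(x)}:u\in Z_k,\;\|u\|=1\}$, which tends to $0$ as $k\to\infty$ by the compact embedding of Lemma \ref{com}; Proposition \ref{pr1} then gives an estimate of the form $J(u)\ge c_1\|u\|^{p^--1}-c_2\beta_k^{q^-}\|u\|^{q^+}$ on spheres in $Z_k$, from which the requisite $\inf$-behaviour (bounded below by $0$, or tending to $0$, depending on the exact variant invoked) follows. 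Combined with (PS), the fountain theorem variant then yields a sequence of critical values of $J$ accumulating at $0$ (dual fountain scheme) or diverging to $+\infty$, providing infinitely many distinct weak solutions of \eqref{imain}.

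The principal technical obstacle is to knit the four inhomogeneous principal terms into a single coherent analysis: their exponents $p(x),p(x)\pm 1$ force Proposition \ref{pr1} to be applied in several separate regimes according to whether the various Luxemburg norms exceed $1$ or not, and the weights $A,B,C,D$ degenerate simultaneously at $x_0$ (since $a(x_0)=0$) as well as wherever $\nabla a$ or $\nabla p$ vanishes. The weighted Caffarelli--Kohn--Nirenberg inequality of Theorem \ref{caf} is precisely what keeps the cross-terms arising from differentiation of the $|a(x)u(x)|^{p(x)}$-type integrands controllable in terms of $\|u\|$, and it is therefore used repeatedly in making the coercivity and $(S_+)$ arguments work.
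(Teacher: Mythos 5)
Your proposal follows essentially the same route as the paper: the same energy functional on $W^{1,p(x)}_{0,a(x)}(\Omega)$, the compact embedding of Lemma \ref{com}, the quantity $\beta_{k}\rightarrow 0$ (Lemma \ref{beta}), the small-sphere (dual) fountain geometry on $Y_{k}$ and $Z_{k}$, and strong convergence of the minimax sequences via monotonicity and the Simon inequalities; the paper merely packages the compactness step through Zou's $\lambda$-parametrized variant (Theorem \ref{zou}), proving boundedness and strong convergence of the sequences $u(\lambda_{n})$ that the theorem produces instead of verifying (PS) for $I_{1}$ directly, which is materially the same computation you propose. Two points in your sketch need correcting. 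First, on small spheres in $Z_{k}$ you claim $J(u)\geq c_{1}\|u\|^{p^{-}-1}-c_{2}\beta_{k}^{q^{-}}\|u\|^{q^{+}}$; for $\|u\|<1$ Proposition \ref{pr1} gives modular lower bounds only with the exponents carrying the ``$+$'' superscript, so the correct principal exponent is $p^{+}+1$ (as in Lemma \ref{glem2}) and the subcritical term is controlled by a multiple of $\beta_{k}\|u\|^{q^{-}}$; the geometry survives because $p^{+}+1>q^{-}$, but the inequality as written is not justified. Second, the alternative ``critical values diverging to $+\infty$'' cannot occur: since $q^{+}<p^{-}-1$ by \eqref{crucial}, the functional is coercive, so only the dual scheme with critical values tending to $0^{-}$ (exactly the conclusion of Theorem \ref{zou}) is available; moreover, the quantitative lower bound for $\int_{\Omega}b(x)|u|^{q(x)}/q(x)\,dx$ on finite-dimensional subspaces (condition $(T_{2})$, i.e.\ Lemma \ref{glem1} and estimate \eqref{epsilon}) still has to be supplied, since equivalence of norms does not directly apply to the non-homogeneous variable-exponent modular, although a compactness-of-the-unit-sphere argument repairs this easily.
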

\begin{remark}  The main problem in treating equation
\eqref{imain} is the presence of the indefinite potential $a(x)$, which can
vanish at $x_{0}$. To overcome this difficulty, we have proved a new
type of the Caffarelli-Kohn-Nirenberg inequality, Theorem \ref{caf},
which is very useful to prepare the variational framework of
equation \eqref{imain}, for example Lemma \eqref{com}. Moreover, we
 remark that the functions $A$, $B$, $C$, and $D$ that appear in
equation \eqref{imain} are strongly related to our
Caffarelli-Kohn-Nirenberg type theorem. To the best of our knowledge, there are no
known results on the existence of solutions to problem \eqref{imain}. Hence, in order
to prove Theorem \ref{app}, we use the previous section in relationship
with some technical lemma related to the critical point theorem
established by Zou.
\end{remark}

In order to prove Theorem \ref{app} we define the functional $I:
W^{1,p(x)}_{0,a(x)}(\Omega) \rightarrow \mathbb{R}$ by
\begin{align*}
I(u)&=\displaystyle \int_{\Omega} \frac{B(x)}{p(x)} |\nabla
u(x)|^{p(x)}dx+ \displaystyle \int_{\Omega}
\frac{A(x)}{p(x)}|u(x)|^{p(x)}dx+\displaystyle \int_{\Omega}
\frac{C(x)}{p(x)-1}|u(x)|^{p(x)-1}dx\\
&+ \displaystyle \int_{\Omega} \frac{D(x)}{p(x)+1}|u(x)|^{p(x)+1}dx
-\displaystyle \int_{\Omega}b(x)\frac{ |u(x)|^{q(x)}}{q(x)}dx.
\end{align*}
Standard arguments show that $I\in
C^{1}(W^{1,p(x)}_{0,a(x)}(\Omega), \mathbb{R})$ and
\begin{align*}
\langle I'(u),v\rangle&= \displaystyle \int_{\Omega} B(x) |\nabla
u(x)|^{p(x)-2}\nabla u(x) \nabla v(x)dx + \displaystyle
\int_{\Omega}A(x)|u(x)|^{p(x)-2}u(x) v(x)dx \\&+ \displaystyle
\int_{\Omega}D(x)|u(x)|^{p(x)-1}u(x) v(x)dx+ \displaystyle
\int_{\Omega}C(x)|u(x)|^{p(x)-3}u(x)v(x)dx\\& - \int_{\Omega}
b(x)|u(x)|^{q(x)-2}u(x) v(x),
\end{align*}
for all $u,v \in W^{1,p(x)}_{0,a(x)}(\Omega)$. Thus, in order to
find weak solutions of problem \eqref{imain} it suffices to find
critical points of the associated energy  $I$.

 Consider the functional
$$I_{\lambda}(u)=J(u)-\lambda K(u),$$
where \begin{align*} J(u)&=\displaystyle \int_{\Omega}
\frac{B(x)}{p(x)} |\nabla u(x)|^{p(x)}dx+ \displaystyle
\int_{\Omega} \frac{C(x)}{p(x)-1}|u(x)|^{p(x)-1}dx\\&+\displaystyle
\int_{\Omega} \frac{A(x)}{p(x)}|u(x)|^{p(x)}dx+\displaystyle
\int_{\Omega} \frac{D(x)}{p(x)+1}|u(x)|^{p(x)+1}dx
\end{align*}
 and
$$K(u)=\displaystyle \int_{\Omega}b(x)\frac{ |u(x)|^{q(x)}}{q(x)}dx.
$$
Then any critical point of $I_{1}$ is a weak solution of problem
\eqref{imain}.

An important ingredient of the proof of Theorem \ref{app} is the
following version of the fountain theorem, see Zou
\cite{Z}.

\begin{thm}\label{zou}
 Suppose that the functional $I_{\lambda}$ defined above satisfies the following conditions:\\
 $\left(T_{1}\right) \ \ I_{\lambda}$ maps bounded sets to bounded sets uniformly for $\lambda \in \left[1,2\right]$.
 Furthermore, $I_{\lambda}(-u)=I_{\lambda}(u)$ for all $\left(\lambda, u\right)\in \left[1,2\right]\times E$, where $E:=W^{1,p(x)}_{0,a(x)}(\Omega)$;\\
 $\left(T_{2}\right) \ \ B(u)\geq 0, \ \ B(u)\rightarrow \infty$ as $\left\|u\right\|\rightarrow \infty$ on any finite-dimensional subspace of $E$; and\\
 $\left(T_{3}\right)$ there exist $\rho_{k}>r_{k}>0$ such that
 $$a_{k}(\lambda):=\displaystyle \inf_{u\in Z_{k},\left\|u\right\|
 =\rho_{k}}I_{\lambda}(u)\geq 0 >b_{k}(\lambda)=\displaystyle \max_{u\in Y_{k}, \left\|u\right\|=r_{k}}I_{\lambda}(u)\quad\mbox{for}\ \lambda \in \left[1,2\right],$$ $$d_{k}(\lambda)
 =\displaystyle \inf_{u\in Z_{k},\left\|u\right\|\leq \rho_{k}}I_{\lambda}(u)\rightarrow 0\quad\mbox{as $k \rightarrow \infty$ uniformly for $\lambda \in \left[1,2\right]$}.$$

Then there exist a sequence of real numbers $(\lambda_{n})$
converging to 1 and $u(\lambda_{n})\in Y_{n}$ such that
$I_{\lambda_{n}}'|Y_{n}\left(u_{\lambda_{n}}\right)=0$ and
$\left(I_{\lambda_{n}}\right)(u\left(\lambda_{n}\right))\rightarrow
c_{k}\in \left[d_{k}(2), b_{k}(1)\right]$, as $n \rightarrow \infty$.
In particular, for fixed $k\in \mathbb{N}$, if
$\left(u(\lambda_{n})\right)$ has a convergent subsequence
to $u_{k}$, then $I_{1}$ has infinitely many nontrivial
critical points $\left(u_{k}\right)\subset{E}\backslash
\left\{0\right\}$ satisfying $I_{1}\left(u_{k}\right)\rightarrow
0^{-}$ as $k\rightarrow \infty$.
\end{thm}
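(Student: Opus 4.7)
The plan is to apply Zou's fountain theorem (Theorem \ref{zou}) to the one-parameter family $I_\lambda = J - \lambda K$ on $E = W^{1,p(x)}_{0,a(x)}(\Omega)$, using the Schauder decomposition into $Y_k$ and $Z_k$ already fixed in the paper, for $\lambda \in [1,2]$. After verifying $(T_1)$, $(T_2)$, $(T_3)$, I will be handed a sequence $\lambda_n \to 1$ and critical points $u(\lambda_n) \in Y_n$ of $I_{\lambda_n}|_{Y_n}$ with energies lying in a bounded interval $[d_k(2), b_k(1)]$; the remaining principal task will then be to show this sequence admits a strongly convergent subsequence in $E$, producing infinitely many nontrivial critical points of $I_1$, i.e. weak solutions of \eqref{imain}.

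Conditions $(T_1)$ and $(T_2)$ are essentially bookkeeping. The symmetry $I_\lambda(-u)=I_\lambda(u)$ is immediate since every integrand is even in $u$; boundedness of $I_\lambda$ on bounded sets uniformly in $\lambda\in[1,2]$ follows from Proposition \ref{pr1} applied to each modular appearing in $J$ and $K$, together with the continuous embedding given by Lemma \ref{com}. For $(T_2)$, interpreting the functional $B$ of the theorem as our $K$, one uses that on any finite-dimensional subspace all norms are equivalent and $q^->1$, so $K(u)\ge 0$ and $K(u)\to\infty$ as $\|u\|\to\infty$.

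The decisive input is $(T_3)$. Set $\beta_k:=\sup_{u\in Z_k,\ \|u\|=1}|u|_{q(x)}$. The compact embedding from Lemma \ref{com}, whose hypothesis $q^+<Np^-/(N+sp^+)$ is exactly half of \eqref{crucial}, combined with the standard argument that any weak subsequential limit lies in $\bigcap_k Z_k=\{0\}$, forces $\beta_k\to 0$. For $u\in Z_k$ with $\|u\|\le 1$, Proposition \ref{pr1} and the continuous embedding yield an estimate of the shape
\[
I_\lambda(u)\ \ge\ \tfrac{1}{p^+}\|u\|^{p^+}-2|b|_\infty\beta_k^{q^-}\|u\|^{q^-},
\]
and choosing $\rho_k$ of order $\beta_k^{q^-/(p^+-q^-)}\to 0$ — allowed because $q^-<p^+$ — gives $a_k(\lambda)\ge 0$ and $d_k(\lambda)\to 0$ uniformly in $\lambda$. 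On the finite-dimensional $Y_k$, norm equivalence together with $q^->1$ produces $r_k\in(0,\rho_k)$ with $b_k(\lambda)<0$, completing $(T_3)$.

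The main obstacle is promoting $(u(\lambda_n))$ produced by Theorem \ref{zou} to a convergent subsequence. First I would show it is bounded in $E$: combining $I_{\lambda_n}(u(\lambda_n))\in[d_k(2),b_k(1)]$ with $\langle I_{\lambda_n}'(u(\lambda_n)),u(\lambda_n)\rangle=0$ and forming a suitable linear combination, the inequality $q^+<p^--1$ from \eqref{crucial} is what allows the $K$-contribution to be absorbed into $J$, giving a bound on each of the four modulars comprising $\|u(\lambda_n)\|$. Passing to a subsequence, $u(\lambda_n)\rightharpoonup u_k$ in $E$ and, by Lemma \ref{com}, strongly in $L^{q(x)}(\Omega)$, so $\int b(x)|u(\lambda_n)|^{q(x)-2}u(\lambda_n)(u(\lambda_n)-u_k)\,dx\to 0$. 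A standard $(S_+)$-type argument applied to the $p(x)$-Laplacian principal part (together with monotonicity of the lower-order terms in $u$) upgrades weak to strong convergence in $E$, yielding $u(\lambda_n)\to u_k$. The last clause of Theorem \ref{zou} then delivers the infinitely many critical points $(u_k)$ of $I_1$ with $I_1(u_k)\to 0^-$, finishing the proof.
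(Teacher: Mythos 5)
There is a genuine and fundamental mismatch here: your proposal does not prove Theorem \ref{zou} --- it \emph{applies} it. The statement in question is the abstract variant fountain theorem itself, which the paper imports from Zou \cite{Z} without proof: given an arbitrary family $I_{\lambda}=J-\lambda K$ satisfying $(T_1)$--$(T_3)$, one must \emph{produce} the sequence $\lambda_n\to 1$ and the critical points $u(\lambda_n)$ of $I_{\lambda_n}|_{Y_n}$ with levels accumulating in $[d_k(2),b_k(1)]$. Your argument opens with ``apply Zou's fountain theorem (Theorem \ref{zou})'' and then spends its effort verifying $(T_1)$--$(T_3)$ for the specific functional attached to problem \eqref{imain} and upgrading the resulting sequence to a strongly convergent one. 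As a proof of Theorem \ref{zou} this is circular; what you have actually written is a (reasonably faithful) outline of the proof of Theorem \ref{app}, tracking the paper's Lemmas \ref{beta}, \ref{glem1}, \ref{glem2}, \ref{glem3}, the use of \eqref{crucial} to get boundedness of $(u(\lambda_n))$, and the Simon-inequality $(S_+)$ argument that closes that proof. Had the target been Theorem \ref{app}, your route would essentially coincide with the paper's; but it was not.

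A genuine proof of Theorem \ref{zou} requires machinery that appears nowhere in your write-up. In Zou's argument one introduces, for each $k$, minimax values of the form $c_k(\lambda)=\inf_{\gamma\in\Gamma_k}\max_{u\in B_k}I_{\lambda}(\gamma(u))$ over a class $\Gamma_k$ of odd continuous maps of the ball $B_k=\{u\in Y_k:\ \|u\|\le\rho_k\}$, and uses a Borsuk--Ulam type intersection lemma to show that the geometry $a_k(\lambda)\ge 0>b_k(\lambda)$ pins $c_k(\lambda)$ into $[d_k(\lambda),b_k(\lambda)]$. Since $K\ge 0$, the map $\lambda\mapsto c_k(\lambda)$ is monotone, hence differentiable for almost every $\lambda\in[1,2]$; at such $\lambda$ the bound on the difference quotient yields \emph{bounded} minimizing sequences, and a quantitative deformation argument on the finite-dimensional spaces $Y_n$ then produces critical points of $I_{\lambda}|_{Y_n}$ at levels near $c_k(\lambda)$. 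A final diagonal extraction along a sequence of such $\lambda$'s tending to $1$ gives the conclusion. None of these steps (minimax classes, intersection lemma, monotonicity and a.e.\ differentiability in $\lambda$, deformation lemma) can be replaced by the verifications $(T_1)$--$(T_3)$, which are hypotheses of the theorem rather than ingredients of its proof.
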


We start with the following auxiliary property.
\begin{lemma}\label{beta}
Assume that condition $(B)$ holds. Then we have
$$\beta_{k}=\displaystyle \sup_{u\in Z_{k}, \|u\|=1}
 \displaystyle \int_{\Omega}b(x)\frac{ |u(x)|^{q(x)}}{q(x)}dx \rightarrow 0 \ \ \mbox{as} \ \ k\rightarrow +\infty.$$
\end{lemma}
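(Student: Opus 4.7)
The plan is to follow the standard route used when applying a Zou-type fountain theorem: pick near-maximizers on each $Z_k$, pass to a weak limit, identify the limit as zero via the biorthogonal system, and then upgrade to strong convergence using the compact embedding established in Lemma \ref{com}.

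First I would choose, for each $k$, an element $u_k\in Z_k$ with $\|u_k\|=1$ such that
\[
\beta_k - \frac{1}{k} \leq \int_{\Omega} b(x)\,\frac{|u_k(x)|^{q(x)}}{q(x)}\,dx \leq \beta_k.
\]
Since $W^{1,p(x)}_{0,a(x)}(\Omega)$ is reflexive and $(u_k)$ is bounded, a subsequence (not relabelled) converges weakly to some $u\in W^{1,p(x)}_{0,a(x)}(\Omega)$. The key point is then that $u=0$: because $u_k\in Z_k=\overline{\oplus_{j\geq k}E_j}$, we have $e_n^{\ast}(u_k)=0$ whenever $n<k$, so letting $k\to\infty$ and using weak continuity of each $e_n^{\ast}$ gives $e_n^{\ast}(u)=0$ for every $n\geq 1$. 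Since the family $(e_n^{\ast})$ is total on $W^{1,p(x)}_{0,a(x)}(\Omega)$, this forces $u=0$.

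Next I would invoke Lemma \ref{com}: condition \eqref{crucial} guarantees $q^+<Np^-/(N+sp^+)$, so the embedding $W^{1,p(x)}_{0,a(x)}(\Omega)\hookrightarrow L^{q(x)}(\Omega)$ is compact. Weak convergence $u_k\rightharpoonup 0$ therefore upgrades to strong convergence $u_k\to 0$ in $L^{q(x)}(\Omega)$, and by Proposition \ref{pr2} one also has $\int_\Omega |u_k|^{q(x)}\,dx\to 0$. Using the boundedness of $b$ given by $(B)$ together with $q(x)>1$, the estimate
\[
0\leq \int_{\Omega} b(x)\,\frac{|u_k(x)|^{q(x)}}{q(x)}\,dx \leq \|b\|_{L^\infty(\Omega)}\int_\Omega |u_k(x)|^{q(x)}\,dx
\]
shows that the right-hand side vanishes as $k\to\infty$. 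Combining this with the choice of $u_k$ yields $\beta_k\to 0$.

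The main obstacle is not the computation itself but the identification of the weak limit as zero: one must have a biorthogonal system $(e_n,e_n^{\ast})$ at hand (which is available since $W^{1,p(x)}_{0,a(x)}(\Omega)$ is separable and reflexive, as cited via \cite{china} in the excerpt) and one must know that the compact embedding of Lemma \ref{com} applies at the exponent $q(x)$, which is precisely why the assumption \eqref{crucial} is imposed. Everything else is routine once these two inputs are in place.
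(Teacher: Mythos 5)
Your proof is correct and follows essentially the same route as the paper: choose near-maximizers $u_k\in Z_k$ with $\|u_k\|=1$, observe $u_k\rightharpoonup 0$ (the paper asserts this directly from $u_k\in Z_k$, while you justify it via the biorthogonal functionals $e_n^{\ast}$), and use the compact embedding of Lemma \ref{com} together with condition $(B)$ to send the weighted integrals to zero. The only detail worth adding is that, since you pass to a subsequence, you should note (as the paper does) that $Z_{k+1}\subset Z_k$ makes $\beta_k$ nonincreasing, so convergence to zero along a subsequence already yields $\beta_k\rightarrow 0$ for the full sequence.
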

\begin{proof}
It is clear that (we keep "$0<$" since $b>0$ and
$\|u\|=1$) $0<\beta_{k+1}\leq\beta_{k}$, so that
$\beta_{k}\rightarrow \beta\geq 0$ as $k\rightarrow +\infty$. For
every $k\geq 0$, by definition of $\beta_{k}$, there exists
$u_{k}\in Z_{k}$ such that $\left\|u_{k}\right\|=1$ and
$\displaystyle \int_{\Omega}b(x)
\frac{\left|u_{k}\right|^{q(x)}}{q(x)}dx>\frac{\beta_{k}}{2}$. Since
$u_{k}\in Z_{k}$, it follows that $u_{k}\rightharpoonup 0$ in
$W^{1,p(x)}_{0,a(x)}(\Omega)$. Lemma \ref{com} implies that, up to a subsequence,
$$\displaystyle \int_{\Omega}b(x)
\frac{\left|u_{k}\right|^{q(x)}}{q(x)}dx \rightarrow 0\quad \mbox{as}\quad
k\rightarrow +\infty.$$ Thus, $\beta=0$ and the proof is complete.
\end{proof}

The next result establishes that $B$ is coercive on finite-dimensional subspaces of
$W^{1,p(x)}_{0,a(x)}(\Omega)$.

\begin{lemma}\label{glem1}
Assume that hypotheses of Theorem \ref{app} are fulfilled. Then
$K(u)\rightarrow +\infty$ as $\|u\|\rightarrow +\infty$
on any finite-dimensional subspace of $W^{1,p(x)}_{0,a(x)}(\Omega).$
\end{lemma}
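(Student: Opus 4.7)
My plan is to introduce a weighted Luxemburg-type norm on the finite-dimensional subspace $F\subset W^{1,p(x)}_{0,a(x)}(\Omega)$ that controls $K$, and then exploit the fact that any two norms on $F$ are equivalent. Concretely, I would define
$$N(u):=\inf\left\{\mu>0\,:\,\int_{\Omega}b(x)\left|\frac{u(x)}{\mu}\right|^{q(x)}dx\leq 1\right\},$$
which is exactly the Luxemburg norm of the auxiliary function $v(x):=b(x)^{1/q(x)}u(x)$ in $L^{q(x)}(\Omega)$.

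Before using it, I would verify that $N$ is finite and defines a norm on $W^{1,p(x)}_{0,a(x)}(\Omega)$. Finiteness follows from condition $(B)$ (which makes $b$ bounded and strictly positive) together with the embedding $W^{1,p(x)}_{0,a(x)}(\Omega)\hookrightarrow L^{q(x)}(\Omega)$ supplied by Lemma \ref{com}; positive homogeneity and the triangle inequality are built into the Luxemburg construction; and $N(u)=0$ forces $v=0$ a.e., hence $u=0$ a.e., using again $b>0$ from $(B)$.

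Since $F$ is finite-dimensional, the two norms $\|\cdot\|$ and $N(\cdot)$ restricted to $F$ are equivalent, so there exists $c_F>0$ such that $N(u)\geq c_F\|u\|$ for every $u\in F$. Therefore $\|u\|\to+\infty$ along $F$ forces $N(u)\to+\infty$. For $\|u\|$ sufficiently large, $|v|_{q(x)}=N(u)>1$, so Proposition \ref{pr1}$(ii)$ applied to $v\in L^{q(x)}(\Omega)$ gives
$$\int_{\Omega}b(x)|u(x)|^{q(x)}\,dx=\int_{\Omega}|v(x)|^{q(x)}\,dx\geq |v|_{q(x)}^{q^{-}}=N(u)^{q^{-}}.$$
Since $q(x)\leq q^{+}$ pointwise, dividing yields
$$K(u)=\int_{\Omega}b(x)\frac{|u(x)|^{q(x)}}{q(x)}\,dx\geq \frac{1}{q^{+}}N(u)^{q^{-}}\longrightarrow +\infty\quad\mbox{as}\quad \|u\|\to+\infty,$$
which is the desired conclusion.

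The only step requiring genuine care is the verification that $N$ is a bona fide norm (in particular, positive-definiteness and finiteness), since this is where the assumption $b>0$ from condition $(B)$ and the continuous embedding from Lemma \ref{com} are essential. Everything else is a routine transplant of the modular/norm machinery of Propositions \ref{pr1}--\ref{pr2} together with the elementary equivalence of norms on a finite-dimensional subspace.
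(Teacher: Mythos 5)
Your argument is correct, but it takes a genuinely different route from the paper's. The paper fixes the finite-dimensional subspace $F$ and proves, by a compactness-and-contradiction argument on the normalized sequence $v_n=u_n/\|u_n\|$ (strong convergence in $F$, the embedding from Lemma \ref{com}, and an elementary convexity estimate on $|v_0|=|v_0-v_n+v_n|$), the uniform measure bound \eqref{epsilon}: the set where $\widetilde{b}(x)|u|^{q(x)}\geq\epsilon_{1}\|u\|^{q(x)}$ has measure at least $\epsilon_{1}$ for every $u\in F\setminus\{0\}$; coercivity of $K$ then follows by integrating over that set. You instead absorb the weight into the Luxemburg norm $N(u)=|b^{1/q(\cdot)}u|_{q(x)}$, invoke the equivalence of all norms on the finite-dimensional $F$ to obtain $N(u)\geq c_{F}\|u\|$, and convert norm growth into modular growth via Proposition \ref{pr1}(ii), arriving at the explicit bound $K(u)\geq \frac{1}{q^{+}}c_{F}^{q^{-}}\|u\|^{q^{-}}$ for $\|u\|>1/c_{F}$; this is shorter and bypasses the measure-theoretic step entirely. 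Both proofs rest on the same underlying inputs: $b>0$ and $b\in L^{\infty}(\Omega)$ from (B), and the (injective) embedding $W^{1,p(x)}_{0,a(x)}(\Omega)\hookrightarrow L^{q(x)}(\Omega)$ supplied by Lemma \ref{com} -- you use it for finiteness and positive-definiteness of $N$, the paper to pass $v_{n}\rightarrow v_{0}$ into the weighted modular; the identification of elements of the space with measurable functions, needed for $N(u)=0\Rightarrow u=0$, is exactly the identification the paper itself uses when it concludes $v_{0}=0$ contradicts $\|v_{0}\|=1$. The one thing the paper's longer argument buys is the quantitative statement \eqref{epsilon} itself, which is reused in the proof of Lemma \ref{glem3}; if one adopts your proof, that later step should instead invoke your lower bound for $K$ on $Y_{k}$ (with a constant depending on $k$), which serves the same purpose.
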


\begin{proof}
Let $F$ be a finite-dimensional subspace of
$W^{1,p(x)}_{0,a(x)}(\Omega)$. Put
$$\widetilde{b}(x)=\frac{b(x)}{q(x)}, \ \ \mbox{for all}\ x\in \Omega.$$ We start by showing that there exists $\epsilon_{1}>0$
such that
\begin{equation}\label{epsilon}
m\left\{x\in \Omega; \ \
\widetilde{b}(x)\left|u\right|^{q(x)}\geq
\epsilon_{1}\left\|u\right\|^{q(x)}\right\}\geq \epsilon_{1}, \ \
\mbox{for all}\ u\in{F \backslash  \left\{0\right\}}.
\end{equation}
Otherwise, for any positive integer $n$, there exists $u_{n}\in{
F\backslash  \left\{0\right\}}$ such that
\begin{equation}\label{nnn}
m\left\{x\in \Omega; \ \
\widetilde{b}(x)\left|u_{n}\right|^{q(x)}\geq
\frac{1}{n}\left\|u_{n}\right\|^{q(x)}\right\}<\frac{1}{n}.
\end{equation}
Set $v_{n}(x)=\frac{u_{n}(x)}{\left\|u_{n}\right\|}\in F\backslash
\left\{0\right\}$. Then $\left\|v_{n}\right\|=1$ for all
$n\in\mathbb{N}$ and
$$m\left\{x\in \Omega; \ \ \widetilde{b}(x)\left|v_{n}\right|^{q(x)}\geq \frac{1}{n}\right\}<\frac{1}{n}.$$
Passing to a subsequence, we may assume that $v_{n}\rightarrow
v_{0}$ in $W^{1,p(x)}_{0,a(x)}(\Omega)$ for some $v_{0}\in F$. Then
$\left\|v_{0}\right\|=1$ and, by
 Lemma \ref{com},
\begin{equation}\label{b}
\displaystyle \int_{\Omega}\widetilde{b}(x)
\left|v_{n}-v_{0}\right|^{q(x)}dx \rightarrow 0 \ \ \mbox{as} \ \
n\rightarrow +\infty.
\end{equation}

We claim that there exists $\gamma_{0}>0$ such that
\begin{equation}\label{gamma}
m\left\{x\in \Omega; \ \
\widetilde{b}(x)\left|v_{0}\right|^{q(x)}\geq
\gamma_{0}\right\}\geq \gamma_{0}.
\end{equation}
Indeed, arguing by contradiction, we have
$$m\left\{x\in \Omega; \ \ \widetilde{b}(x)\left|v_{0}\right|^{q(x)}\geq \frac{1}{n}\right\}=0, \ \ \mbox{for all}\ n\in{\mathbb{N}}.$$
It follows that
$$0\leq \displaystyle \int_{\Omega}\widetilde{b}(x) \left|v_{0}\right|^{q(x)+1}dx<\frac{\left\|v_{0}\right\|_{1}}{n}
\rightarrow 0, \ \ \mbox{as} \ \ n \rightarrow +\infty.$$ Hence
$v_{0}=0$, which contradicts $\left\|v_{0}\right\|=1.$

Set
$$\Omega_{0}=\left\{x\in \Omega; \ \ \widetilde{b}(x)\left|v_{0}\right|^{q(x)}\geq \gamma_{0}\right\},
\ \    \Omega_{n}=\left\{x\in \Omega;
 \ \ \widetilde{b}(x)\left|v_{n}\right|^{q(x)}< \frac{1}{n}\right\}$$
 and  $$\Omega_{n}^{c}=\left\{x\in
 \Omega;
  \ \ \widetilde{b}(x)\left|v_{n}\right|^{q(x)}\geq \frac{1}{n}\right\}.$$
By \eqref{nnn} and \eqref{gamma}, we obtain
\begin{align*}
m\left(\Omega_{n}\cap \Omega_{0}\right)&=m\left(\Omega_{0}\backslash \left(\Omega_{n}^{c}\cap \Omega_{0}\right)\right)\\
&\geq m\left( \Omega_{0}\right)-m\left(\Omega_{n}^{c}\cap \Omega_{0}\right)\\
&\geq \gamma_{0}-\frac{1}{n}>\frac{\gamma_{0}}{2}
\end{align*}
for large enough $n$. Consequently, (the inequality is
correct since $q(x)>1$, $y^{q(x)}$ is convex and we write
$|v_{0}|=|v_{0}-v_{n}+v_{n}|$)
\begin{align*}
\displaystyle
\int_{\Omega}\widetilde{b}(x)\left|v_{n}-v_{0}\right|^{q(x)}dx
&\geq
 \displaystyle \int_{\Omega_{n}\cap \Omega_{0}}\widetilde{b}(x)\left|v_{n}-v_{0}\right|^{q(x)}dx\\
&\geq \frac{1}{2^{q^{+}-1}}\int_{\Omega_{n}\cap
\Omega_{0}}\widetilde{b}(x)\left|v_{0}\right|^{q(x)}dx
-\int_{\Omega_{n}\cap \Omega_{0}}\widetilde{b}(x)\left|v_{n}\right|^{q(x)}dx\\
&\geq \left(\frac{\gamma_{0}}{2^{q^{+}-1}}-\frac{1}{n}\right) m\left(\Omega_{n}\cap \Omega_{0}\right)\\
&\geq\frac{\gamma_{0}^{2}}{2^{q^{+}+1}}>0,
\end{align*}
for all large $n$, which is a contradiction to \eqref{b}. Therefore
\eqref{epsilon} holds. For the $\epsilon_{1}$ given in
\eqref{epsilon}, let
$$\Omega_{u}=\left\{x\in \Omega; \ \ \widetilde{b}(x)\left|u\right|^{q(x)}\geq
 \epsilon_{1}\left\|u\right\|^{q(x)}\right\}, \ \ \mbox{for all}\ u\in F\backslash \left\{0\right\}. $$
Then
\begin{equation}\label{omegau}
m\left(\Omega_{u}\right)\geq \epsilon_{1} \ \ \mbox{for all} \ \
u\in F\backslash \left\{0\right\}.
\end{equation}
From $\left(B\right)$ and \eqref{omegau}, for any $u\in F\backslash
\left\{0\right\}$  with $\|u\|\geq 1$, we get
\begin{align*}
K(u)&=\displaystyle
\int_{\Omega}\widetilde{b}(x)\left|u\right|^{q(x)}dx
 \geq \displaystyle \int_{\Omega_{u}}\widetilde{b}(x)\left|u\right|^{q(x)}dx\\
&\geq \epsilon_{1} \left\|u\right\|^{q^{-}}
m\left(\Omega_{u}\right)\geq
\epsilon_{1}^{2}\left\|u\right\|^{q^{-}}.
\end{align*}
This implies that $K(u)\rightarrow \infty$ as
$\left\|u\right\|\rightarrow \infty$ on any finite-dimensional
subspace of $E$ and this completes the proof.
\end{proof}

\begin{lemma}\label{glem2}
Suppose that the conditions of Theorem \ref{app} are satisfied. Then
there exists a sequence $\rho_{k}\rightarrow 0^{+}$ as $k\rightarrow
+\infty$ such that
$$a_{k}(\lambda)=\displaystyle \inf_{u\in Z_{k}, \|u\|=\rho_{k}}I_{\lambda}(u)\geq 0, \ \ \mbox{for all}\ k\geq k_{1}$$
and
$$d_{k}(\lambda)=\displaystyle \inf_{u\in Z_{k}, \|u\|\leq \rho_{k}}I_{\lambda}(u) \rightarrow 0 \ \
\mbox{as} \ \ k\rightarrow +\infty \ \ \mbox{uniformly for all} \ \
\lambda \in [1,2]. $$
\end{lemma}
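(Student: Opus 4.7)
The plan is to exploit the decomposition $I_{\lambda}(u)=J(u)-\lambda K(u)$, combining Lemma~\ref{beta} with the norm-modular estimates of Proposition~\ref{pr1} to derive matching bounds on $J$ from below and on $K$ from above, and then to balance them through a suitable choice of $\rho_k$.

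First I would produce a clean lower bound on $J$. The functional $J$ is the sum of four modulars associated with the four variable-exponent sub-norms that make up $\|u\|$, with exponent functions $p(x)$, $p(x)$, $p(x)+1$, $p(x)-1$. Set $P=p^{+}+1$. When $\|u\|\le 1$, each sub-norm $\|u\|_i$ (for $i=1,2,3,4$) is also $\le 1$, so Proposition~\ref{pr1}(iii) bounds each modular from below by the corresponding sub-norm raised to the supremum of its exponent, i.e.\ by $\|u\|_i^{P_i}$ with $P_i\in\{p^{+},p^{+},p^{+}+1,p^{+}-1\}$. Combined with the elementary fact that $t^{P_i}\ge t^{P}$ for $t\in[0,1]$ and $P_i\le P$, this yields
\begin{equation*}
J(u)\ge C_{1}\bigl(\|u\|_{1}^{P}+\|u\|_{2}^{P}+\|u\|_{3}^{P}+\|u\|_{4}^{P}\bigr).
\end{equation*}
The convexity estimate $(a_1+\cdots+a_4)^{P}\le 4^{P-1}(a_1^{P}+\cdots+a_4^{P})$ then promotes this to $J(u)\ge C_{2}\|u\|^{P}$ on $\{\|u\|\le 1\}$.

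For the upper bound on $K$, fix $u\in Z_k$ with $0<\|u\|\le 1$, normalise $v=u/\|u\|\in Z_k$, $\|v\|=1$, and use the pointwise inequality $\|u\|^{q(x)}\le\|u\|^{q^{-}}$ together with the definition of $\beta_k$ from Lemma~\ref{beta} to get $K(u)\le\beta_k\|u\|^{q^{-}}$. Therefore, for $\lambda\in[1,2]$ and $\|u\|\le 1$,
\begin{equation*}
I_{\lambda}(u)\ge C_{2}\|u\|^{P}-2\beta_k\|u\|^{q^{-}}.
\end{equation*}
I would then set $\rho_{k}=(4\beta_k/C_{2})^{1/(P-q^{-})}$, which is legitimate because hypothesis \eqref{crucial} gives $q^{-}\le q^{+}<p^{-}-1<P$, so the exponent $1/(P-q^{-})$ is a positive real, and $\beta_k\to 0$ forces $\rho_k\to 0^{+}$. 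For $k$ large enough we have $\rho_k<1$, and on $\{u\in Z_k:\|u\|=\rho_k\}$ the two terms in the lower bound are balanced up to a factor of $2$, so $I_{\lambda}(u)\ge \tfrac12 C_{2}\rho_k^{P}\ge 0$, which delivers $a_{k}(\lambda)\ge 0$. Dropping $J\ge 0$ on the ball $\{\|u\|\le\rho_k\}$ gives $I_{\lambda}(u)\ge -2\beta_k\rho_k^{q^{-}}$; since $\beta_k\rho_k^{q^{-}}\to 0$ and $I_{\lambda}(0)=0$, it follows that $d_{k}(\lambda)\to 0$ uniformly in $\lambda\in[1,2]$.

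The main obstacle is the first step: the norm on $W^{1,p(x)}_{0,a(x)}(\Omega)$ is a sum of four variable-exponent norms with three distinct exponent schemes, so, unlike the classical fixed-exponent setting, there is no single modular one can exploit. The trick of first reducing to $\|u\|\le 1$ and then lifting every sub-norm to the common largest exponent $P=p^{+}+1$ before summing is what produces a homogeneous lower bound for $J$ in terms of $\|u\|$; this is exactly what is needed to match the $\|u\|^{q^{-}}$ behaviour of $K$ and to close the standard balancing argument used in fountain-type results.
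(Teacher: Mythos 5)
Your proposal is correct and follows essentially the same route as the paper: both lower-bound the four modulars in $J$ via Proposition~\ref{pr1} for $\|u\|\le 1$, lift everything to the common exponent $p^{+}+1$ with the elementary power-sum inequality to get $J(u)\ge C\|u\|^{p^{+}+1}$, bound $\lambda K(u)\le 2\beta_k\|u\|^{q^{-}}$ via Lemma~\ref{beta}, and choose $\rho_k\sim\beta_k^{1/(p^{+}+1-q^{-})}$ to conclude both $a_k(\lambda)\ge 0$ and $d_k(\lambda)\to 0$ uniformly. The differences are only in the explicit constants.
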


\begin{proof}
By Proposition \ref{pr1} and Lemma \ref{com}, we deduce that for any $u\in
Z_{k}$ with $\|u\|<1$, we have
\begin{equation}\label{n}
\begin{array}{ll}
I_{\lambda}(u)&\displaystyle\geq  \frac{1}{p^{+}}\left(\displaystyle \int_{\Omega}
B(x) |\nabla u(x)|^{p(x)}dx+\displaystyle \int_{\Omega}
A(x)|u(x)|^{p(x)}dx\right)\\
&\displaystyle + \frac{1}{p^{+}+1}\displaystyle \int_{\Omega}
D(x) | u(x)|^{p(x)+1}dx\\
&\displaystyle +\frac{1}{p^{+}-1} \displaystyle
\int_{\Omega}C(x)|u(x)|^{p(x)-1}dx- \lambda\displaystyle
\int_{\Omega}\frac{b(x)}{q(x)} |u(x)|^{q(x)}dx
\\ &\displaystyle\geq \frac{1}{4^{p^{+}+2}(p^{+}+1)}
\|u\|^{p^{+}+1}-\lambda\|u\|^{q^{-}}\displaystyle
\int_{\Omega}\frac{b(x)}{q(x)}(\frac{|u(x)|}{\|u\|})^{q(x)}dx \\
&\displaystyle\geq \frac{1}{4^{p^{+}+2}(p^{+}+1)}
\|u\|^{p^{+}+1}-\frac{2\beta_{k}}{q^{-}}\|u\|^{q^{-}}.
\end{array}
\end{equation}

 We denote
$\rho_{k}=(\frac{4^{p^{+}+3}(p^{+}+1)\beta_{k}}{q^{-}})^{\frac{1}{p^{+}+1-q^{-}}}$.
By invoking Lemma \ref{beta} we can deduce that $\rho_{k}\rightarrow 0$ as
$k\rightarrow +\infty$. Then there exists $k_{1}\in \mathbb{N}$ such
that $\rho_{k}\leq  \frac{1}{4^{p^{+}+3}(p^{+}+1)}$ for all $k\geq
k_{1}$. Relation \eqref{n} implies that
$$a_{k}(\lambda)=\displaystyle \inf_{u\in Z_{k},\|u\|=\rho_{k}}I_{\lambda}(u)
\geq \frac{1}{2.4^{p^{+}+3}(p^{+}+1)}\rho_{k}^{p^{+}+1}, \ \
\mbox{for all} \ \ k\geq k_{1}.$$ Furthermore, by \eqref{n}, we have
$$\displaystyle \inf_{u\in Z_{k}, \|u\|\leq
\rho_{k}}I_{\lambda}(u)\geq -\frac{2\beta_{k}}{q^{-}}\|u\|^{q^{-}},
\ \ \mbox{for all}\ k\geq k_{1}.$$ Having in mind
$I_{\lambda}(0)=0$, then
$$\inf_{u\in Z_{k}, \|u\|\leq
\rho_{k}}I_{\lambda}(u)\leq0, \ \ \forall k\geq k_{1}.$$
 Using the fact that $\beta_{k},\rho_{k}\rightarrow 0$ as
$k\rightarrow +\infty$ and the above inequalities, we
deduce that
$$d_{k}(\lambda)=\displaystyle \inf_{u\in Z_{k}, \|u\|=\rho_{k}}I_{\lambda}(u) \rightarrow 0
\ \ \mbox{as} \ \ k\rightarrow +\infty \ \ \mbox{uniformly for all} \ \
\lambda \in [1.2].$$ This completes the proof.
\end{proof}

\begin{lemma}\label{glem3}
Assume that hypotheses of Theorem \ref{app} are fulfilled. Then, for
the sequence obtained in Lemma \ref{glem2}, there exists
$0<r_{k}<\rho_{k}$ for all $k\in \mathbb{N}$ such that
$$b_{k}(\lambda)=\displaystyle \max_{u\in Y_{k}, \|u\|=r_{k}}I_{\lambda}(u)<0 \ \ \mbox{for all} \ \ \lambda \in [1,2].$$
\end{lemma}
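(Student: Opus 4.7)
Since $K(u)\ge 0$, for every $\lambda\in[1,2]$ we have $I_\lambda(u)\le I_1(u)=J(u)-K(u)$, so it suffices to exhibit $0<r_k<\rho_k$ with $J(u)-K(u)<0$ for every $u\in Y_k$ such that $\|u\|=r_k$; then $b_k(\lambda)\le b_k(1)<0$ uniformly in $\lambda\in[1,2]$. The driving fact will be the strict inequality $q^+<p^--1$ coming from \eqref{crucial} (together with compactness of $\overline\Omega$ and continuity of $q$). My plan is to upper-bound $J$ by a multiple of $\|u\|^{p^--1}$ and to lower-bound $K$ on the finite-dimensional subspace $Y_k$ by a positive multiple of $\|u\|^{q^+}$, so that the negative $-K$ term wins as $\|u\|\to 0^+$.

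For the upper bound on $J$, I observe that when $\|u\|\le 1$ each of the four Luxemburg norms entering the definition of $\|u\|$ is itself $\le 1$; Proposition \ref{pr1}(iii) then controls each modular in $J$ by the corresponding Luxemburg norm raised to the smallest exponent of its integrand. This yields $\|u\|^{p^-}/p^-$ for the $B$- and $A$-terms, $\|u\|^{p^--1}/(p^--1)$ for the $C$-term, and $\|u\|^{p^-+1}/(p^-+1)$ for the $D$-term. Since $\|u\|\le 1$, the exponent $p^--1$ is the smallest of these, so I obtain $J(u)\le C_0\|u\|^{p^--1}$ with a constant $C_0$ depending only on $p^-$, independent of $u$ and of $k$.

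For the lower bound on $K$, I revisit the measure-theoretic step from the proof of Lemma \ref{glem1}: applied to the finite-dimensional subspace $Y_k$, it produces a constant $\varepsilon^{(k)}>0$ such that
$$m\bigl(\{x\in\Omega:\widetilde b(x)|u(x)|^{q(x)}\ge\varepsilon^{(k)}\|u\|^{q(x)}\}\bigr)\ge\varepsilon^{(k)}\quad\text{for every }u\in Y_k\setminus\{0\}.$$
Denoting this set by $\Omega_u$ and using $\|u\|^{q(x)}\ge\|u\|^{q^+}$ whenever $\|u\|\le 1$, I get $K(u)\ge(\varepsilon^{(k)})^2\|u\|^{q^+}$ for every $u\in Y_k$ with $0<\|u\|\le 1$.

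Combining the two estimates, for $\|u\|=r_k\le\min(\rho_k,1)$ and $\lambda\in[1,2]$ I obtain
$$I_\lambda(u)\le C_0\,r_k^{p^--1}-(\varepsilon^{(k)})^2\,r_k^{q^+}=r_k^{q^+}\bigl(C_0\,r_k^{p^--1-q^+}-(\varepsilon^{(k)})^2\bigr),$$
which is strictly negative once $r_k$ is chosen small enough, since $p^--1-q^+>0$. The only subtlety is that the constant $\varepsilon^{(k)}$ depends on the subspace $Y_k$, forcing $r_k$ to be $k$-dependent; the statement only requires existence of such an $r_k$ for each $k$, so this is harmless.
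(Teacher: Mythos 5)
Your proof is correct and follows essentially the same route as the paper: bound $J(u)\le C_0\|u\|^{p^--1}$ via Proposition \ref{pr1} when $\|u\|\le 1$, bound $K$ from below on the finite-dimensional space $Y_k$ via the measure estimate \eqref{epsilon} from the proof of Lemma \ref{glem1}, and conclude from the exponent gap $q^+<p^--1$ guaranteed by \eqref{crucial} by taking $r_k$ small. In fact, your use of $\|u\|^{q^+}$ (rather than the paper's $\|u\|^{q^-}$) in the lower bound for $K$ is the correct choice when $\|u\|<1$, so your write-up is, if anything, slightly more careful on that point.
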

\begin{proof}
Let $u\in Y_{k}$ with $\|u\|<1$ and $\lambda \in [1,2]$. By $(A)$,
$(P)$ and \eqref{epsilon}, there exists $\epsilon_{k}>0$ such that
\begin{align*}
I_{\lambda}(u)&=\displaystyle \int_{\Omega} \frac{B(x)}{p(x)}
|\nabla u(x)|^{p(x)}dx+ \displaystyle \int_{\Omega}
\frac{A(x)}{p(x)}|u(x)|^{p(x)}dx+\displaystyle
\int_{\Omega}\frac{C(x)}{p(x)-1}|u|^{p(x)-1}dx\\&+\displaystyle
\int_{\Omega} \frac{D(x)}{p(x)+1}|u(x)|^{p(x)+1}dx
-\lambda \displaystyle \int_{\Omega}b(x)\frac{ |u(x)|^{q(x)}}{q(x)}dx\\
&\leq(\frac{2}{p^{-}}+
\frac{1}{p^{-}+1}+\frac{1}{p^{-}-1})\|u\|^{p^{-}-1}-\epsilon_{k}\|u\|^{q^{-}}
m(\Omega_{u})\\
&\leq (\frac{2}{p^{-}}+
\frac{1}{p^{-}+1}+\frac{1}{p^{-}-1})\|u\|^{p^{-}-1}-\epsilon_{k}^{2}\|u\|^{q^{-}}.
\end{align*}
Since $0<q^{-}<q^{+}< p^{-}<p^{+}$, we deduce that for small
$\|u\|=r_{k}$ we have
$$b_{k}(\lambda)<0, \ \ \mbox{for all}\ k\in \mathbb{N}.$$
 This concludes the proof of Lemma \ref{glem3}.
 \end{proof}

\subsection{Proof of Theorem \ref{app} completed.} Evidently, condition
$\left(T_{1}\right)$ in Theorem \ref{zou} holds. By Lemmas
\ref{glem1}, \ref{glem2} and \ref{glem3}, conditions
$\left(T_{2}\right)$ and $\left(T_{3}\right)$ in Theorem \ref{zou}
are satisfied. Then, by Theorem \ref{zou} there exist
$\lambda_{n}\rightarrow 1$ and $u(\lambda_{n})\in Y_{n}$ such that
$$I'_{\lambda_{n}}|Y_{n}(u(\lambda_{n}))=0, \ \
 I_{\lambda_{n}}(u(\lambda_{n}))\rightarrow c_{k}\in \left[d_{k}(2), b_{k}(1)\right]$$
as $n\rightarrow +\infty.$

For the sake of notational simplicity, we always set in what follows
$u_{n}=u\left(\lambda_{n}\right)$ for all $n\in \mathbb{N}.$

\smallskip
\textsc{Claim:} the sequence $(u_{n})$ is bounded in
$W^{1,p(x)}_{0,a(x)}(\Omega).$

Arguing by contradiction, we suppose that  $(u_{n})$ is
unbounded in $W^{1,p(x)}_{0,a(x)}(\Omega)$. Without loss of
generality, we can assume that $\|u_{n}\|> 1$ for all $n\geq1$.

Observe first that there exists $c>0$ such that for large enough
$n$,
\begin{equation}\label{ba}
\langle I'_{\lambda_{n}}(u_{n}), u_{n}\rangle \leq \|u_{n}\| \ \
\mbox{and} \ \ |I_{\lambda_{n}}(u_{n})|\leq c.
\end{equation}
Using relation \eqref{ba}, we have
\begin{equation}\label{new}
\begin{array}{ll}
c&\displaystyle\geq I_{\lambda_{n}}(u_{n})\geq\frac{1}{p^{+}}(\displaystyle
\int_{\Omega} B(x) |\nabla u_{n}(x)|^{p(x)}dx+ \displaystyle
\int_{\Omega}
A(x)|u_{n}(x)|^{p(x)}dx)\\
&\displaystyle +\frac{1}{p^{+}-1}\displaystyle
\int_{\Omega}C(x)|u_{n}|^{p(x)-1}dx\\
&\displaystyle +\frac{1}{p^{+}+1}\displaystyle \int_{\Omega}
D(x)|u_{n}(x)|^{p(x)+1}dx -\frac{2}{q^{-}} \displaystyle
\int_{\Omega}b(x) |u_{n}(x)|^{q(x)}dx.
\end{array}
\end{equation}
Combining Proposition \ref{pr1}, relation \eqref{new} and since
$q^{+}<p^{-}-1<p^{-}<p^{+}<p^{+}+1$, it follows that $(u_{n})$ is
bounded in $W^{1,p(x)}_{0,a(x)}(\Omega)$. This shows that our claim
is true. So, by Lemma \ref{com} and up to a
subsequence, we can assume that
$$u_{n}\rightharpoonup u_{0} \ \ \mbox{in} \ \ W^{1,p(x)}_{0,a(x)}(\Omega)$$
and
$$u_{n}\rightarrow u_{0} \ \ \mbox{in} \ \ L^{q(x)}(\Omega).$$

In what follows, we show that
$$u_{n}\rightarrow u_{0} \ \ \mbox{in} \ \ W^{1,p(x)}_{0,a(x)}(\Omega).$$
Having in mind that $(u_{n})$ is a bounded sequence, we get
\begin{equation}\label{p1}
\displaystyle \lim_{n \rightarrow +\infty}\langle
I_{\lambda_{n}}'(u_{n})-I'_{\lambda_{n}}(u_{0}),u_{n}-u_{0}\rangle=0.
\end{equation}
Hence, \eqref{p1} and Lemma \ref{com} give as $n\rightarrow +\infty$
\begin{align*}
o(1)&=\langle I_{\lambda_{n}}'(u_{n})-I'_{\lambda_{n}}(u_{0}),u_{n}-u_{0}\rangle\\
&=\displaystyle \int_{\Omega} B(x)(|\nabla u_{n}(x)|^{p(x)-2}\nabla
u_{n}(x)-|\nabla
u_{0}(x)|^{p(x)-2}\nabla u_{0}(x))(\nabla u_{n}(x)-\nabla u_{0}(x))dx\\
&+\displaystyle \int_{\Omega}
A(x)(|u_{n}(x)|^{p(x)-2}u_{n}(x)-|u_{0}(x)|^{p(x)-2}u_{0}(x))(u_{n}(x)-u_{0}(x))dx\\
&+\displaystyle \int_{\Omega}
D(x)(|u_{n}(x)|^{p(x)-1}u_{n}(x)-|u_{0}(x)|^{p(x)-1}u_{0}(x))(u_{n}(x)-u_{0}(x))dx\\
&+ \displaystyle \int_{\Omega}
C(x)(|u_{n}(x)|^{p(x)-3}u_{n}(x)-|u_{0}(x)|^{p(x)-3}u_{0}(x))(u_{n}(x)-u_{0}(x))dx.
\end{align*}

 We have for all $n\in \mathbb{N}$
$$\int_{\Omega} B(x)(|\nabla u_{n}(x)|^{p(x)-2}\nabla
u_{n}(x)-|\nabla u_{0}(x)|^{p(x)-2}\nabla u_{0}(x))(\nabla
u_{n}(x)-\nabla u_{0}(x))dx\geq 0,$$
$$\displaystyle \int_{\Omega} A(x)(|u_{n}(x)|^{p(x)-2}u_{n}(x)-|u_{0}(x)|^{p(x)-2}u_{0}(x))(u_{n}(x)-u_{0}(x))dx\geq
0,$$
$$\displaystyle \int_{\Omega}C(x)(|u_{n}(x)|^{p(x)-3}u_{n}(x)-|u_{0}(x)|^{p(x)-3}u_{0}(x))(u_{n}(x)-u_{0}(x))dx\geq
0,$$
 and
$$ \displaystyle \int_{\Omega}
D(x)(|u_{n}(x)|^{p(x)-1}u_{n}(x)-|u_{0}(x)|^{p(x)-1}u_{0}(x))(u_{n}(x)-u_{0}(x))dx\geq
0.$$
Therefore
\begin{equation}\label{p2}
\displaystyle \lim_{n\rightarrow +\infty}\int_{\Omega} B(x)(|\nabla
u_{n}(x)|^{p(x)-2}\nabla u_{n}(x)-|\nabla u_{0}(x)|^{p(x)-2}\nabla
u_{0}(x))(\nabla u_{n}(x)-\nabla u_{0}(x))dx= 0,
\end{equation}
 \begin{equation}\label{p3}
\displaystyle \lim_{n\rightarrow +\infty}\displaystyle \int_{\Omega}
A(x)
(|u_{n}(x)|^{p(x)-2}u_{n}(x)-|u_{0}(x)|^{p(x)-2}u(x))(u_{n}(x)-u_{0}(x))dx=0,
 \end{equation}
 \begin{equation}\label{p0}
\displaystyle \lim_{n\rightarrow +\infty}\displaystyle \int_{\Omega}
C(x)
(|u_{n}(x)|^{p(x)-3}u_{n}(x)-|u_{0}(x)|^{p(x)-3}u(x))(u_{n}(x)-u_{0}(x))dx=0,
 \end{equation}
 and
 \begin{equation}\label{p4}
\displaystyle \lim_{n\rightarrow +\infty}\displaystyle \int_{\Omega}
D(x)(|u_{n}(x)|^{p(x)-1}u_{n}(x)-|u_{0}(x)|^{p(x)-1}u_{0}(x))(u_{n}(x)-u_{0}(x))dx=0.
 \end{equation}
 Let us now recall the Simon inequalities \cite[formula 2.2]{simon} (see also \cite[p. 713]{fprcpde})
\begin{equation}\label{sim}
\begin{cases}
\left|x-y\right|^{p}\leq
c_{p}\left(\left|x\right|^{p-2}x-\left|y\right|^{p-2}y\right).(x-y)
\; \; \; \; \;  \; \; \; \; \;   \; \; \; \; \; \; \; \; \; \; \; \; \; \; \; \; \; \mbox{for} \ \ p\geq 2 \\
 \left|x-y\right|^{p} \leq C_{p}\left[\left(\left|x\right|^{p-2}x-\left|y\right|^{p-2}y\right)
 .(x-y)\right]^{\frac{p}{2}}\left(\left|x\right|^{p}+\left|y\right|^{p}\right)^{\frac{2-p}{2}} \; \; \mbox{for} \ \ 1<p< 2,
\end{cases}
\end{equation}
for all $x,y\in \mathbb{R}^{N}$, where $c_{p}$ and $C_{p}$ are
positive constants depending only on $p$. Combining \eqref{p2},
\eqref{p3}, \eqref{p4}, \eqref{p4} and \eqref{sim}, we conclude that
$$\displaystyle \lim_{n\rightarrow +\infty}\|u_{n}-u_{0}\|=0.$$
 Now, invoking  Theorem
\ref{zou}, we complete the proof of  Theorem \ref{app}.\qed

\begin{remark}  We point out that the multiplicity property described in Theorem \ref{app} is somehow related with Theorem 1.1 established in Bahrouni \cite{bahrouni}. However, there are several differences between problem \eqref{imain} studied in this paper and problem (1.1) considered in \cite{bahrouni}.
 For instance, the main result in \cite{bahrouni} is concerned with the existence of infinitely many solutions (as in our case) but for a class of {\it semilinear elliptic equations} driven by the Laplace equation and with a reaction term defined by the sum of two power-type {\it concave} terms.
Problem \eqref{imain} in the present work has a much more complicated structure. For instance, the non-homogeneous differential operator is perturbed by two power-type terms with variable exponent. Moreover, in the present work
 we are concerned with competition effects between several {\it variable exponents} and indefinite potentials. A crucial role in the analysis developed in the present paper is played by the main abstract result established in the first part of this paper, namely the weighted version of the Caffarelli-Kohn-Nirenberg inequality for variable exponents. Such an abstract result (even for constant exponents) is not used in \cite{bahrouni}. The  analysis carried out in this paper includes the {\it degenerate} case, which corresponds to a potential that can vanish in one or more points. Finally, it is worth pointing out that this potential is assumed to be {\it indefinite} and not positive, as in \cite{bahrouni}.
\end{remark}  

\subsection{Perspectives and open problems}
The methods developed in this paper can be extended to more general variational integrals. We mainly refer to energy functionals associated
to non-homogeneous operators of the type $-{\rm div}\, (\phi(x,|\nabla u|)\nabla u)$, which extend the standard $p(x)$-Laplace operator.
These operators have been introduced by Kim and Kim \cite{kim}; see also Baraket, Chebbi, Chorfi, and R\u adulescu \cite{baraket} for recent advances in this new abstract setting.

We believe that a valuable research direction is to generalize the abstract approach developed in this paper to the framework of
 {\it double-phase} variational integrals studied by Mingione {\it et al.} \cite{mingi1, mingi2}. We expect that a related Caffarelli-Kohn-Nirenberg inequality can be established for  energies of the type
\begin{equation}\label{integr1}u\mapsto \int_\Omega \left[|\nabla u|^{p(x)}+a(x)|\nabla u|^{q(x)}\right]dx\end{equation}
or
\begin{equation}\label{integr2}u\mapsto \int_\Omega \left[|\nabla u|^{p(x)}+a(x)|\nabla u|^{q(x)}\log(e+|x|)\right]dx,\end{equation}
where $p(x)\leq q(x)$, $p\not=q$, and $a(x)\geq 0$. In the case of two different materials that involve power hardening exponents $p(x)$
and $q(x)$, the coefficient $a(x)$ describes the geometry of a composite of these two materials. When $a(\cdot)>0$ then the $q(\cdot)$-material is present.
In the opposite case,  the $p(\cdot)$-material is the only one describing the composite. We also point out that since the integral energy functional defined in
 \eqref{integr2} has a degenerate behavior on the zero set of the gradient, it is natural to study what happens if the integrand is modified in such a way that,
 also if $|\nabla u|$ is small, there exists an imbalance between the two terms of the integrand.

\medskip
{\bf Acknowledgements.} The authors thank both anonymous referees for their careful reading of our paper and for their remarks and comments, which have considerably improved the initial version of this work.
V.R. and D.R. were partially supported by the Slovenian Research Agency grants P1-0292, J1-8131, J1-7025, and N1-0064.
 V.R. also acknowledges the support through a grant of the Romanian Ministry of Research and Innovation, CNCS-UEFISCDI, project number PN-III-P4-ID-PCE-2016-0130, within PNCDI III.

\end{document}